\documentclass[11pt]{article}
\usepackage{amsfonts}
\usepackage{color}
\usepackage{amsmath,amssymb,comment}
\newtheorem{theo}{Theorem}[section]
\newtheorem{lem}[theo]{Lemma}
\newtheorem{cor}[theo]{Corollary}

\newtheorem{rem}[theo]{Remark}

\newcommand{\mysection}[1]{\section{#1} \setcounter{equation}{0}}
\newcommand{\proof}{{\sc Proof.} \quad}
\newcommand{\proofc}{{\sc Proof} \ }
\newcommand{\be}{\begin{equation} \label}
\newcommand{\ee}{\end{equation}}
\newcommand{\bea}{\begin{eqnarray}\label}
\newcommand{\eea}{\end{eqnarray}}
\newcommand{\bas}{\begin{eqnarray*}}
\newcommand{\eas}{\end{eqnarray*}}
\newcommand{\bit}{\begin{itemize}}
\newcommand{\eit}{\end{itemize}}
\newcommand{\qed}{\hfill$\Box$ \vskip.2cm}
\newcommand{\nn}{\nonumber}
\newcommand{\R}{\mathbb{R}}

\newcommand{\pO}{\partial\Omega}

\newcommand{\hra}{\hookrightarrow}
\newcommand{\io}{\int_\Omega}

\newcommand{\del}{\delta}
\newcommand{\al}{\alpha}
\newcommand{\lam}{\lambda}

\newcommand{\sig}{\sigma}

\newcommand{\bom}{\overline{\Omega}}
\newcommand{\Om}{\Omega}

\newcommand{\ov}{\overline}

\newcommand{\wh}{\widehat}

\newcommand{\hs}{\hspace*}

\newcommand{\vp}{\varphi}
\newcommand{\lbal}{\left\{ \begin{array}{l}}
\newcommand{\lball}{\left\{ \begin{array}{ll}}
\newcommand{\ear}{\end{array} \right.}

\newcommand{\abs}{\\[5pt]}

\newcommand{\adb}{\allowdisplaybreaks}
\newcommand{\tm}{T_{max}}

\begin{document}
\adb
\title{
Large-data global solutions to a quasilinear model for\\
viscuos acoustic wave propagation
in a non-isothermal setting}
\author{
Felix Meyer\footnote{felixmy@math.uni-paderborn.de}\\
{\small Universit\"at Paderborn, Institut f\"ur Mathematik}\\
{\small 33098 Paderborn, Germany}
\and
Michael Winkler\footnote{michael.winkler@math.uni-paderborn.de}\\
{\small Universit\"at Paderborn, Institut f\"ur Mathematik}\\
{\small 33098 Paderborn, Germany} }
\date{}
\maketitle
\begin{abstract}
\noindent 
The manuscript considers the model for conversion of mechanical energy into heat during acoustic wave propagation
in the presence of temperature-dependent elastic parameters, as given by
\bas
	\lbal
	u_{tt} = (\gamma(\Theta) u_{xt})_x + a (\gamma(\Theta) u_x)_x, \\[1mm]
	\Theta_t = D\Theta_{xx} + \gamma(\Theta) u_{xt}^2.
	\ear
	\qquad \qquad (\star)
\eas
It is firstly shown that when considered along with no-flux boundary conditions in an open bounded real interval $\Om$,
under the assumption that $\gamma\in C^2([0,\infty))$ is such that $\gamma>0$ and $\gamma'\ge 0$ on $[0,\infty)$ as well as
\bas
	D\cdot (\gamma+D) \cdot \gamma'' + 2\gamma \gamma'^2 \le 0
	\qquad \mbox{on } [0,\infty),
\eas
for all suitably regular initial data this problem admits a globally defined classical solution.
This complements recent findings in the literature, according to which ($\star$) may admit solutions blowing up in finite time
whenever $\gamma$ is positive and nondecreasing on $[0,\infty)$ with $\int_0^\infty \frac{d\xi}{\gamma(\xi)} < \infty$.\abs
Apart from that, it is found that if the additional assumption
\bas
	a|\Om|^2 \le \frac{\pi^2 \gamma(0)}{1+\sqrt{1+\frac{\gamma(0)}{D}}}
\eas
is satisfied, the all these solutions stabilize toward some spatially homogeneous equilibrium in the large time limit.\abs
\noindent {\bf Key words:} nonlinear thermoacoustics; large-data solution; decay\\
{\bf MSC 2020:} 74F05 (primary); 35B40, 74A15, 80A17 (secondary)
\end{abstract}
%
%
%
%

%
%
%
%
%
%
\newpage
\section{Introduction}\label{intro}
This manuscript is concerned with the simplified model for the conversion of mechanical energy into heat during propagation of acoustic waves in elastic solids, as given by
\be{00}
	\lbal
	u_{tt} = (\gamma(\Theta) u_{xt})_x + a (\gamma(\Theta) u_x)_x, \\[1mm]
	\Theta_t = D\Theta_{xx} + \gamma(\Theta) u_{xt}^2,
	\ear
\ee
which arises as a simplified model for the conversion of mechanical energy into heat during propagation of acoustic waves
in elastic solids.
Motivated by experimental findings on corresponding material behavior in piezoceramics (\cite{friesen}),
in extension of classical wave equations of the form $u_{tt}=(a\gamma u_x)_x$ the system (\ref{00}) addresses situations in which
the elastic parameter $a\gamma$ may depend on the temperature $\Theta=\Theta(x,t)$ at the respective site. 
According to a core postulate in the modeling of Kelvin-Voigt type solids, (\ref{0}) additionally assumes frictional
generation of heat, with the corresponding viscosity exhibiting a temperature dependence parallel to the above
(\cite{friesen}, \cite{Gubinyi2007}).\abs
As seen in recent literature, sufficiently strong growth of the key ingredient $\gamma$ may enforce a finite-time collapse
of some solutions to (\ref{00}). In \cite{win_SIMA}, namely, it was found that if $a>0$ and $D>0$, and if 
$\gamma\in C^2([0,\infty))$ is a nondecreasing and positive function satisfying
\be{growth}
	\int_0^\infty \frac{d\xi}{\gamma(\xi)} < \infty,
\ee
then there exists a considerably large set of initial data for which (\ref{00}),
posed under homogeneous boundary conditions of Dirichlet type for $u$ and
of Neumann type for $\Theta$ in an open bounded interval $\Om$, admits a classical solution $(u,\Theta)$ in $\Om\times (0,T)$
with some $T>0$ at which blow-up occurs in the sense that
\be{hotspot}
	\limsup_{t\nearrow T} \|\Theta(\cdot,t)\|_{L^\infty(\Om)} = \infty.
\ee
In fact, the result in \cite{win_SIMA} actually applies to a model class more general than that in (\ref{00}), and 
straightforward adaptation of the arguments therein confirms that a similar conclusion can be drawn in the presence of
different boundary conditions, such as of homogeneous Neumann type for both components;
in this sense, phenomena of finite-time temperature hotspot formation in the style of (\ref{hotspot}) can
be viewed as a fairly robust consequence of the growth condition from (\ref{growth}) in systems of the form in (\ref{00}).
This is in sharp contrast to properties well-known as core characteristics of one-dimensional models for thermoelastic
evolution in the presense of constant elastic parameters:
Indeed, a broad literature addressing such scenarios 
has provided comprehensive results not only on global solvability
(\cite{racke_zheng}, \cite{kim}, \cite{jiang_QAM1993}, \cite{watson}, \cite{dafermos}, \cite{dafermos_hsiao_smooth},
\cite{slemrod}, \cite{bies_cieslak}),
but also on large time stabilization toward homogeneous states (\cite{racke_zheng}, \cite{hsiao_luo}, \cite{cieslak_MAAN}),
even in contexts both of classical thermoelasticity and of standard Kelvin-Voigt models
which are considerably more complex than (\ref{00}) by additionally accounting for thermal dilation meachanisms.
In suitably weakened form, some parts of these findings even extend to higher-dimensional systems involving
constant elastic coefficients (\cite{roubicek}, \cite{blanchard_guibe}, \cite{mielke_roubicek}, \cite{owczarek_wielgos},
\cite{cieslak_CVPD}).\abs
{\bf Main results.} \quad
The present manuscript aims at examining how far imposing suitable assumptions on temperature dependencies of parameters
may exclude phenomena of singularity formation.
In fact, for (\ref{00}) and close relatives the knowledge on large-time solvability to date seems rather limited:
For widely arbitrary $\gamma$ and initial data of arbitrary size, results on local existence can be derived 
within various concepts of solvability (\cite{fricke}, \cite{claes_win}, \cite{win_AMOP}), while in the presence
of temperature-dependent functions $\gamma$, global solutions so far seem to have been constructed only under appropriate
smallness conditions both on $a$ and the initial data (\cite{fricke}, \cite{claes_win}).
In \cite{meyer}, it has recently been shown that whenever suitably regular but possibly large initial data have been fixed, 
given $T>0$ one can identify a hypothesis on smallness of the derivative $\gamma'$ as sufficient to ensure 
the existence of a classical solution with lifespan exceeding $T$.
Except for the caveat from \cite{win_SIMA} concerned with functions $\gamma$ which grow in a superlinear manner by satisfying
(\ref{growth}), however, the literature does not provide substantial
information on the behavior of solutions emanating from large initial data.
In particular, it seems not even known whether bounded but nonconstant $\gamma$ may facilitate unboundedness phenomena
of the form in (\ref{hotspot}) or any other type; for such nonlinearities, only certain generalized solutions with possibly poor
regularity properties seem to have been found to exist globally so far (\cite{claes_lankeit_win}).\abs
In order to conveniently focus our considerations in this regard, unlike in several previous studies we shall here
concentrate on the apparently minimal model (\ref{00}) for thermoviscoelastic evolution involving temperature-dependent
parameters, hence neglecting any influence of thermal dilation effects.
Additionally including physically meaningful boundary conditions, we shall hence investigate the initial-boundary value problem
\be{0}
	\lball
	u_{tt} = (\gamma(\Theta) u_{xt})_x + a (\gamma(\Theta) u_x)_x,
	\qquad & x\in\Om, \ t>0, \\[1mm]
	\Theta_t = D\Theta_{xx} + \gamma(\Theta) u_{xt}^2,
	\qquad & x\in\Om, \ t>0, \\[1mm]
	u_x=0, \quad \Theta_x=0,
	\qquad & x\in\pO, \ t>0, \\[1mm]
	u(x,0)=u_0(x), \quad u_t(x,0)=u_{0t}(x), \quad \Theta(x,0)=\Theta_0(x),
	\qquad & x\in\Om,
	\ear
\ee
in an open bounded interval $\Om\subset\R$, with constant parameters $a>0$ and $D>0$, and with prescribed initial data
$u_0, u_{0t}$ and $\Theta_0$.\abs
An evident obstacle for the construction of large-data solutions to (\ref{0}) consists in the circumstance that, in general, 
the action of the nonlinear heat source $\gamma(\Theta) u_{xt}^2$ can apparently not be adequately be controlled on the basis
of elementary energy evolution properties of thermoelastic dynamics, as having formed powerful ingredients in essential
parts of existence theories in precedent related literature (\cite{racke_zheng}, \cite{bies_cieslak}).
Our approach to cope with this is based on the ambition to capture some essential aspects of the interaction in (\ref{0})
by tracing the time evolution of some spatial integrals which contain multiplicative couplings of both solution components.
Specifically, the quantities of central importance for our analysis will be functionals of the form
\be{en}
	y^{(B)}(t):= \io \frac{1}{\gamma(\Theta)+D} \cdot \Big(u_{xt}+au_x\Big)^2
	+ B \io u_x^2
\ee
with some appropriately chosen $B>0$.\abs
In fact, our first observation linked to these, to be detailed in Lemma \ref{lem2}, Lemma \ref{lem3} and Lemma \ref{lem4},
will utilize some suitably small parameters $B$ here to reveal some energy-like properties of the corresonding
functionals $y^{(B)}$ whenever
the smooth, positive and nondecreasing function $\gamma$, besides failing to fulfill (\ref{growth}), satisfies a structural
assumption slightly going beyond concavity (cf.~(\ref{g2})).
Using this information as a starting point for a suitable bootstrap procedure, as the first of our main results
we will obtain in Section \ref{sect2}
that, indeed, in the presence of any such $\gamma$ the problem (\ref{0}) admits a global classical solution:
\begin{theo}\label{theo9}
  Let $\Om\subset\R$ be an open bounded interval, let $a>0$ and $D>0$, and assume that
  \be{g1}
	\gamma\in C^2([0,\infty)) \quad \mbox{is such that \quad $\gamma>0$ and $\gamma'\ge 0$ on } [0,\infty),
  \ee
  and that
  \be{g2}
	D\cdot (\gamma+D) \cdot \gamma'' + 2\gamma \gamma'^2 \le 0
	\qquad \mbox{on } [0,\infty).
  \ee
  Then whenever 
  \be{init}
	\lbal
	u_0\in W^{3,2}(\Om) 
	\mbox{ satisfies $u_{0x}=0$ on $\pO$}, \\[1mm]
	u_{0t}\in W^{2,2}(\Om) 
	\mbox{ satisfies $(u_{0t})_x=0$ on $\pO$} \qquad \mbox{and} \\[1mm]
	\Theta_0\in W^{2,2}(\Om)
	\mbox{ satisfies $\Theta_0\ge 0$ in $\Om$ and $\Theta_{0x}=0$ on $\pO$,}
	\ear
  \ee
  the problem (\ref{0}) posseses a unique global classical solution $(u,\Theta)$ with
  \bas
	\lbal
	u\in \Big( \bigcup_{\mu\in (0,1)} C^{1+\mu,\frac{1+\mu}{2}}(\bom\times [0,\infty))\Big) 
		\cap C^{2,1}(\bom\times (0,\infty))
	\qquad \mbox{and} \\[1mm]
	0\le \Theta\in \Big( \bigcup_{\mu\in (0,1)} C^{1+\mu,\frac{1+\mu}{2}}(\bom\times [0,\infty))\Big) 
		\cap C^{2,1}(\bom\times (0,\infty))
	\ear
  \eas
  which is such that
  \bas
	\begin{array}{l}
	u_t\in 
	\Big( \bigcup_{\mu\in (0,1)} C^{1+\mu,\frac{1+\mu}{2}}(\bom\times [0,\infty))\Big) 
		\cap C^{2,1}(\bom\times (0,\infty)).
	\end{array}
  \eas
\end{theo}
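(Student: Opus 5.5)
Our plan combines a standard local existence theory with an extensibility criterion, and the new input is an a priori estimate obtained from the functional $y^{(B)}$ in (\ref{en}). First, we will rely on a local existence result (in the spirit of \cite{fricke}, \cite{claes_win}, \cite{win_AMOP}). It should give, for data as in (\ref{init}), a unique classical solution on $\Om\times(0,\tm)$ with the stated regularity. It should also come with the criterion that if $\tm<\infty$, then
\[
\limsup_{t\nearrow\tm}\Big(\|\Theta(\cdot,t)\|_{W^{1,2}(\Om)}+\|u_{xt}(\cdot,t)\|_{L^2(\Om)}+\|u_x(\cdot,t)\|_{W^{1,2}(\Om)}\Big)=\infty .
\]
Nonnegativity of $\Theta$ follows from the comparison principle, since the source $\gamma(\Theta)u_{xt}^2$ is nonnegative. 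Integrating the $\Theta$-equation gives $\frac{d}{dt}\io\Theta=\io\gamma(\Theta)u_{xt}^2$. The energy identity, obtained by testing the $u$-equation with $u_t$, gives
\[
\frac{d}{dt}\Big(\frac12\io u_t^2+\io\Theta\Big)=-a\io\gamma(\Theta)u_xu_{xt}.
\]
This is not yet closed, which is why the coupled functional is needed.

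The core step is to show that, for suitably small $B>0$, the functional $y^{(B)}$ is essentially nonincreasing, with a dissipation term. We write $v:=u_{xt}+au_x$. Differentiating the $u$-equation in $x$ gives $v_t=(\gamma(\Theta)v)_{xx}-a u_{xt}$. We then compute
\[
\frac{d}{dt}\io\frac{v^2}{\gamma(\Theta)+D}.
\]
Expanding $(\gamma v)_{xx}$ and integrating by parts produces three kinds of terms. Diffusion terms come from $D\Theta_{xx}$ hitting $\frac{1}{\gamma+D}$, which brings in $\gamma''\Theta_x^2$ and $\gamma'^2\Theta_x^2$. Cross terms have the form $\gamma'\Theta_xvv_x$. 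Finally there is the source contribution $-\frac{\gamma'}{(\gamma+D)^2}\gamma u_{xt}^2v^2$, which is favorable since $\gamma'\ge0$. The weight $\frac{1}{\gamma+D}$ is chosen precisely so that the quadratic form in $(v_x,\Theta_xv)$ is nonpositive. Its discriminant condition reduces exactly to (\ref{g2}), namely $D(\gamma+D)\gamma''+2\gamma\gamma'^2\le0$. The remaining lower-order terms $-a\io\frac{u_{xt}v}{\gamma+D}$ are rewritten via $u_{xt}=v-au_x$ and absorbed using $B\io u_x^2$, whose derivative is $2B\io u_xu_{xt}$, with $B$ small. This should give $y^{(B)}(t)\le C$ for all $t<\tm$, and in addition
\[
\int_0^{\tm}\io\frac{\gamma(\Theta)v_x^2}{\gamma+D}\le C .
\]
Since $\gamma$ is concave by (\ref{g2}), it grows at most linearly, so $\int_0^\infty\frac{d\xi}{\gamma(\xi)}=\infty$. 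Together with the $L^1$ bound for $\Theta$, this should give a bound on $\io\frac{1}{\gamma+D}$ from below, and hence control of $\io u_{xt}^2$ and of the source term in terms of $\|\Theta\|_{L^\infty}$.

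Next we bootstrap on finite time intervals. From the space–time bound on $v_x$ and the $L^1$ control of $\Theta$, we obtain $\int_0^T\|u_{xt}\|_{L^\infty}^2\,dt\le C(T)$ via one-dimensional Gagliardo–Nirenberg, because the weight $\frac{\gamma}{\gamma+D}$ is bounded below by $\frac{\gamma(0)}{\gamma(0)+D}$. Then the $\Theta$-equation has source bounded by $\gamma(\Theta)\|u_{xt}\|_{L^\infty}^2$ with $\gamma$ at most linear. A Gronwall argument on $\io\Theta^2$, followed by $\io\Theta_x^2$, then bounds $\Theta$ in $L^\infty((0,T);W^{1,2})$. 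With $\gamma(\Theta)$ bounded and Hölder continuous, parabolic theory for the $v$-equation (a uniformly parabolic equation in divergence-type form) gives Hölder and $W^{1,2}$ bounds for $v$, hence for $u_x$ and $u_{xt}$. This contradicts the extensibility criterion, so $\tm=\infty$. Uniqueness comes from the local theory.

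We expect the main obstacle to be the differential inequality for $y^{(B)}$. In particular, the exact sign bookkeeping that turns the $\Theta_x$-terms into a perfect-square structure controlled by (\ref{g2}) must be handled carefully, along with the treatment of the $-au_{xt}$ term without losing the dissipation.
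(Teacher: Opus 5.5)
Your strategy is the paper's. Your $v=u_{xt}+au_x$ is the $x$-derivative of the paper's $v=u_t+au$. The weighted functional, the use of (\ref{g2}), the lower bound $\frac{\gamma}{\gamma+D}\ge\frac{\gamma(0)}{\gamma(0)+D}$, and the passage to $\int_0^T\|u_{xt}\|_{L^\infty(\Omega)}^2dt\le C(T)$ are Lemmas \ref{lem2}--\ref{lem6}. Two points in the core step need correcting.

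First, the sign is wrong: $v_t=(\gamma(\Theta)v)_{xx}+au_{xt}$, not $-au_{xt}$. The lower-order contribution to $\frac{d}{dt}\int_\Omega\frac{v^2}{\gamma(\Theta)+D}$ is therefore $2a\int_\Omega\frac{v^2}{\gamma(\Theta)+D}-2a^2\int_\Omega\frac{u_xv}{\gamma(\Theta)+D}$, which is not dissipative. Consequently $y^{(B)}$ is in general not nonincreasing. For instance, for constant $\gamma$, $u_0\equiv 0$ and $u_{0t}$ the first Neumann mode, one has $y'(0)>0$ for every $B$ as soon as $a|\Omega|^2>\pi^2\gamma$. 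The paper obtains monotonicity only under (\ref{aL}), in Lemma \ref{lem22}.

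What you do get is $y'+c\int_\Omega v_x^2\le Cy$. This is where smallness of $B$ enters. The term $\frac{B}{a}\int_\Omega v^2$ produced by $\frac{d}{dt}B\int_\Omega u_x^2$ cannot be bounded by $y^{(B)}$, since the weight $\frac{1}{\gamma(\Theta)+D}$ has no a priori positive lower bound. It must instead be absorbed into the dissipation via Poincar\'e, using $v=0$ on $\partial\Omega$. Gr\"onwall then gives $T$-dependent bounds, which is all a contradiction argument with $T_{max}<\infty$ needs.

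Second, no discriminant condition is involved. With the weight $\frac{1}{\gamma+D}$ the mixed terms $\Theta_x v v_x$ cancel identically. Condition (\ref{g2}) is then exactly the sign of the coefficient $\frac{D(\gamma+D)\gamma''+2\gamma\gamma'^2}{(\gamma+D)^3}$ of $\Theta_x^2v^2$.

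In the bootstrap, the remark about an $L^1$ bound for $\Theta$ and a lower bound for $\int_\Omega\frac{1}{\gamma+D}$ is neither established at that stage (your energy identity does not close) nor needed.

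The Gr\"onwall step for $\int_\Omega\Theta_x^2$ needs more care than indicated. Testing with $-\Theta_{xx}$ leaves $\frac1D\int_\Omega\gamma^2(\Theta)u_{xt}^4$. Bounding this by $\|u_{xt}\|_{L^\infty}^4\int_\Omega\gamma^2(\Theta)$ would require $\int_0^T\|u_{xt}\|_{L^\infty}^4dt<\infty$, which you do not have. You can repair it with three ingredients:
\begin{itemize}
\item $u_{xt}^4\le\|u_{xt}\|_{L^\infty}^2u_{xt}^2$;
\item $\int_\Omega u_{xt}^2\le C(1+\|\Theta\|_{L^\infty})$, from $y^{(B)}$ and $\gamma(\xi)\le\gamma(0)+\gamma'(0)\xi$;
\item $\|\Theta\|_{L^\infty}\le C(1+\|\Theta_x\|_{L^2}^{1/2})$.
\end{itemize}
The paper's Lemma \ref{lem7} is much shorter, however. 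With $h(t):=\|u_{xt}(\cdot,t)\|_{L^\infty}^2\in L^1(0,T)$ one has $\Theta_t\le D\Theta_{xx}+h(t)\gamma(\Theta)$. The spatially flat supersolution solving $z'=h(t)\gamma(z)$, $z(0)=\|\Theta_0\|_{L^\infty}$, exists on $[0,T]$ precisely because $\int^\infty\frac{d\xi}{\gamma(\xi)}=\infty$, so comparison bounds $\Theta$ directly.

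Finally, the parabolic H\"older step is superfluous. Once $\|\Theta\|_{L^\infty}$ is bounded:
\begin{itemize}
\item $\int_\Omega u_{xt}^2$ is bounded through $y^{(B)}$;
\item $\|\Theta_x\|_{L^2}$ follows from the energy estimate above;
\item $\|u_{xx}\|_{L^2}$ follows from $u_{xxt}=v_x-au_{xx}$ and $\int_0^T\!\int_\Omega v_x^2\le C$.
\end{itemize}
Together these already contradict your extensibility criterion. The paper's Lemma \ref{lem_loc} even requires only $\|u_t\|_{W^{1,2}}+\|\Theta\|_{L^\infty}$.
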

Although (\ref{g2}) quite intricately links $\gamma$ to its first two derivatives, some instructive examples can readily
be constructed (cf.~the end of Section \ref{sect2}):
\begin{rem}\label{rem99}
  i) \ Monotonically stabilizing nonlinearities of the form
  \be{99.1}
	\gamma(\xi):=A-B e^{-\al \xi},
	\qquad \xi\ge 0,
  \ee
  can be seen to comply with the above assumptions whenever $\al>0$, $A>0$ and $B\in(0,A)$, provided that $D\ge\frac{2A}{1+\sqrt{8}}$.\abs
  ii) \ But also some unbounded $\gamma$ are admissible: 
  Namely, if $A>0$ and $B>0$, and if $D>0$ is large enough satisfying $D\ge 2B$, then Theorem \ref{theo9} applies to
  \be{99.2}
	\gamma(\xi):=A+B\ln (\xi+1),
	\qquad \xi\ge 0.
  \ee
\end{rem}
The potential of the functionals in (\ref{en}) to facilitate the discovery of essential properties of (\ref{0}), however,
appears to go substantially beyond the statement of Theorem \ref{theo9}.
If in addition to the above the number $a |\Om|^2$ is suitably small, namely, then on the basis of the Poincar\'e inequality
which states that
\be{P}
	\lam_1 \io \vp^2 \le \io \vp_x^2
	\quad \mbox{for all } \vp\in W_0^{1,2}(\Om),
	\quad \mbox{with} \quad
	\lam_1:=\frac{\pi^2}{|\Om|^2},
\ee
for some intermediate choice of $B$ more subtle
than the one underlying Theorem \ref{theo9}, the quantity accordingly defined in (\ref{en}) can actually be seen to
play the role of a genuinely decreasing energy functional (Lemma \ref{lem22}).
Appropriately exploiting this together with some quantitative information on corresponding dissipation rates, 
in the course of a second series of successively self-improving arguments we shall see in Section \ref{sect3} 
that in such constellations,
each of the solutions found above in fact stabilizes toward a semitrivial equilibroum in the large time limit:
\begin{theo}\label{theo33}
  Let $\Om\subset\R$ be an open bounded interval, let $a>0$ and $D>0$, and assume that (\ref{g1}) and (\ref{g2}) 
  are fulfilled with
  \be{aL}
	a|\Om|^2 \le \frac{\pi^2 \gamma(0)}{1+\sqrt{1+\frac{\gamma(0)}{D}}}.
  \ee
  Then whenever (\ref{init}) holds, one can find $\Theta_\infty\ge 0, \beta>0$ and $C>0$ such that the global classical solution $(u,\Theta)$
  of (\ref{0}) from Theorem \ref{theo9} satisfies
  \be{33.1}
	\bigg\| u(\cdot,t)-\frac{1}{|\Om|} \io (u_0+t u_{0t}) \bigg\|_{W^{1,\infty}(\Om)} \le C e^{-\beta t}
	\qquad \mbox{for all } t>0
  \ee
  and
  \be{33.2}
	\bigg\| u_t(\cdot,t)-\frac{1}{|\Om|} \io u_{0t} \bigg\|_{W^{1,\infty}(\Om)} \le C e^{-\beta t}
	\qquad \mbox{for all } t>0
  \ee
  as well as
  \be{33.3}
	\|\Theta(\cdot,t)-\Theta_\infty\|_{W^{1,\infty}(\Om)} \le C e^{-\beta t}
	\qquad \mbox{for all } t>0.
  \ee
\end{theo}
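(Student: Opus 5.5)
The plan is to run the functional $y^{(B)}$ from (\ref{en}) once more, now with an intermediate choice of $B$ adapted to (\ref{aL}). We would first record two conservation properties. Integrating the first equation in (\ref{0}) over $\Om$ and using $u_x=0$ on $\pO$ gives $\frac{d}{dt}\io u_t=0$. Hence $\io u_t(\cdot,t)=\io u_{0t}$ and $\io u(\cdot,t)=\io (u_0+tu_{0t})$ for all $t>0$. By the Poincar\'e--Wirtinger inequality, (\ref{33.1}) and (\ref{33.2}) therefore reduce to exponential decay of $\|u_x(\cdot,t)\|_{L^\infty(\Om)}$, $\|u_{xt}(\cdot,t)\|_{L^\infty(\Om)}$ and of the corresponding second spatial derivatives, or at least of their $L^2$ norms combined with Gagliardo--Nirenberg interpolation against uniform higher-order bounds.

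\textbf{Energy decay.} Writing $w:=u_{xt}+au_x$, the first equation becomes $u_{tt}=(\gamma(\Theta)w)_x$, so that $w$ vanishes on $\pO$. We would then compute $\frac{d}{dt}y^{(B)}$ exactly as in Lemmas \ref{lem2}--\ref{lem4}. The term arising from $\partial_t\frac{1}{\gamma(\Theta)+D}$ is handled with the $\Theta$-equation, and (\ref{g2}) is precisely what makes the resulting $\Theta_x$-dependent cross terms sign-definite. What remains is a dissipation of the form $c\io \frac{\gamma(\Theta)}{\gamma(\Theta)+D} w_x^2$ together with quadratic terms in $w$ and $u_x$; this should be the content of Lemma \ref{lem22}. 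The key new point is to bound the remaining indefinite terms by the dissipation. For this we would use $\gamma(\Theta)\ge\gamma(0)$, which holds since $\Theta\ge0$ and $\gamma'\ge0$, the Poincar\'e inequality (\ref{P}) for $w$ and $u_x$ (both lie in $W_0^{1,2}(\Om)$), and $\lam_1=\pi^2/|\Om|^2$. Optimizing $B$ in the resulting quadratic form should produce exactly the threshold $a\le \frac{\lam_1\gamma(0)}{1+\sqrt{1+\gamma(0)/D}}$, and with it an inequality $y'(t)+\kappa\, y(t)+\kappa\io w_x^2\le 0$. Because (\ref{aL}) is non-strict, some care is needed at equality. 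In that case we would retain the strictly positive contribution from the $u_x^2$ part (coming from $-aB\io u_x^2$-type terms) rather than rely on a strict margin. This yields $y(t)\le y(0)e^{-\kappa t}$, hence exponential decay of $\|u_x\|_{L^2}$ and $\|w\|_{L^2}$, and thus of $\|u_{xt}\|_{L^2}$, together with $\int_t^\infty\io w_x^2\le Ce^{-\kappa t}$.

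\textbf{Temperature.} Since $\frac{d}{dt}\io\Theta=\io\gamma(\Theta)u_{xt}^2$, the limit $\Theta_\infty:=\frac{1}{|\Om|}\lim_{t\to\infty}\io\Theta$ exists and is approached exponentially, provided $\gamma(\Theta)u_{xt}^2$ decays exponentially in $L^1$. This requires an upper bound for $\gamma(\Theta)$, i.e.\ a time-uniform $L^\infty$ bound for $\Theta$. To get it, we would revisit the bootstrap of Section \ref{sect2} and make every estimate time-independent by inserting the now time-integrable dissipation. The chain is: $\io u_{xt}^2$ decays, so the heat source is bounded in $L^1((t,t+1);L^1)$ uniformly in $t$; a uniform bound follows for $\io\Theta^2$ and then for $\io\Theta_x^2$, using $\io w_x^2$ integrable; then parabolic regularity for $w$ and $\Theta$ gives uniform H\"older bounds. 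With $\Theta$ bounded, testing $(\Theta-\overline\Theta)$ by itself gives $\frac{d}{dt}\io(\Theta-\overline\Theta)^2+2D\io\Theta_x^2\le C\io|\Theta-\overline\Theta|u_{xt}^2$. Poincar\'e then gives exponential $L^2$ decay of $\Theta-\overline\Theta$, and the same argument for $\Theta_x$ (smoothing estimates on unit time intervals) gives decay of $\Theta_x$.

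\textbf{Main obstacle.} We expect the main obstacle to be this upgrade from $L^2$ decay to the $W^{1,\infty}$ decay in (\ref{33.1})--(\ref{33.3}). It needs uniform-in-time bounds in spaces like $W^{2,2}$ for $u$, $u_t$ and $\Theta$, after which interpolation of the form $\|\vp_x\|_{L^\infty}\le C\|\vp\|_{W^{2,2}}^{3/4}\|\vp\|_{L^2}^{1/4}$ transfers exponential decay to the $W^{1,\infty}$ norms with a smaller rate $\beta$. We would try first to obtain these bounds by differentiating the $w$-equation, $w_t=\gamma(\Theta)w_{xx}+\gamma'(\Theta)\Theta_xw_x+\dots$, and testing against $-w_{xx}$. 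The forcing terms then decay, and a Gronwall argument with an exponentially decaying inhomogeneity yields the required uniform bounds.
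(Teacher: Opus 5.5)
\textbf{There is a genuine gap: the time-uniform upper bound for $\Theta$.} Your overall structure matches the paper's: mass identities, decay of $y^{(B)}$ for an intermediate $B$, then time-uniform bounds and interpolation. But the step everything else depends on is a bound on $\Theta$ independent of time, and the chain you describe does not produce it.

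The decay $y(t)\le y(0)e^{-\kappa t}$ controls only the weighted quantity $\io \frac{w^2}{\gamma(\Theta)+D}$. Since (\ref{g1})--(\ref{g2}) allow unbounded $\gamma$ (e.g.\ $A+B\ln(\xi+1)$), you cannot infer pointwise-in-time decay of $\io w^2$, and hence of $\io u_{xt}^2$, before knowing $\sup\Theta<\infty$. The dissipation alone gives only $\int_t^\infty \io w^2\le Ce^{-\kappa t}$. Likewise, bounding the heat source $\gamma(\Theta)u_{xt}^2$ in $L^1((t,t+1);L^1(\Om))$ through $\io u_{xt}^2$ already presupposes that $\gamma(\Theta)$ is bounded. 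So the first links of your chain assume what they are meant to prove. The later links do not follow either:
a uniform $L^1((t,t+1);L^1(\Om))$ bound on a heat source permits concentration, so it does not give $\sup_t\io\Theta^2<\infty$; and H\"older theory for the $v$- or $w$-equation needs $\gamma(\Theta)$ bounded above, which is again the missing bound.

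\textbf{The missing idea} is to rerun Lemmas \ref{lem5}--\ref{lem7} on $(0,\infty)$, which is what the paper does in Lemmas \ref{lem23}--\ref{lem25}. Since $w=v_x\in W_0^{1,2}(\Om)$, the one-dimensional embedding gives $\|w\|_{L^\infty(\Om)}^2\le C\io w_x^2$, so $\int_0^\infty\|w\|_{L^\infty(\Om)}^2dt<\infty$ follows from your dissipation alone. From $u_x=e^{-at}u_{0x}+\int_0^t e^{-a(t-s)}w(\cdot,s)\,ds$ and Young's convolution inequality (or the interpolation of Lemma \ref{lem23}) you also get $\int_0^\infty\|u_x\|_{L^\infty(\Om)}^2dt<\infty$. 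Hence $\Theta_t\le D\Theta_{xx}+h(t)\gamma(\Theta)$ with $h:=2\|w\|_{L^\infty(\Om)}^2+2a^2\|u_x\|_{L^\infty(\Om)}^2\in L^1((0,\infty))$. Comparison with the spatially flat supersolution $z'=h(t)\gamma(z)$, $z(0)=\|\Theta_0\|_{L^\infty(\Om)}$, then bounds $\Theta$ for all times, because $\int_0^\infty\frac{d\xi}{\gamma(\xi)}=\infty$ by the concavity contained in (\ref{g2}). Only after this is the weight in $y$ harmless: $\io w^2$ decays (Corollary \ref{cor255}), the $v$-equation is uniformly parabolic, and the heat source is controlled.

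\textbf{On the final upgrade to $W^{1,\infty}$.} This needs neither decay of second derivatives nor $W^{2,2}$ bounds. The paper obtains uniform H\"older bounds for $v_x$ and $\Theta_x$ from standard parabolic (gradient) H\"older estimates and interpolates via Lemma \ref{lem21}. Your plan of testing the differentiated $w$-equation against $-w_{xx}$ generates the term $\gamma'(\Theta)\Theta_{xx}w\,w_{xx}$, and the first-order coefficient is $2\gamma'(\Theta)\Theta_x$, not $\gamma'(\Theta)\Theta_x$. That route therefore requires a coupled second-order estimate for $\Theta$, which you have not closed.

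\textbf{On equality in (\ref{aL}).} With the bookkeeping of Lemma \ref{lem22}, no choice of $\eta_1,\eta_2,B,\del$ makes $c_1$ and $c_2$ both positive at equality, so retaining the $u_x^2$ term gives no margin. The slack comes instead from using $\frac{1}{\gamma(\Theta)+D}\le\frac{1}{\gamma(0)+D}$ rather than $\frac{1}{D}$ in the cross term of Lemma \ref{lem2}. This yields decay whenever $a<\frac{\lam_1\gamma(0)}{2}$, a condition implied by (\ref{aL}) even with equality.
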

\mysection{Global solvabilty. Proof of Theorem \ref{theo9}}\label{sect2}
To guarantee the existence of classical solutions at least on local time intervals, let us state the following lemma 
which can be derived by adapting the proof of Theorem 1.1 in \cite{claes_win} in a straightforward manner.
\begin{lem}\label{lem_loc}
  Let $a>0$ and $D>0$, assume that $\gamma$ satisfies (\ref{g1}),
  and suppose that (\ref{init}) holds.
  Then there exist $\tm\in (0,\infty]$ as well as uniquely determined functions
  \be{l1}
	\lbal
	u\in \Big( \bigcup_{\mu\in (0,1)} C^{1+\mu,\frac{1+\mu}{2}}(\bom\times [0,\tm))\Big) \cap C^{2,1}(\bom\times (0,\tm))
		\qquad \mbox{and} \\[1mm]
	\Theta\in \Big( \bigcup_{\mu\in (0,1)} C^{1+\mu,\frac{1+\mu}{2}}(\bom\times [0,\tm))\Big) 
		\cap C^{2,1}(\bom\times (0,\tm))
	\ear
  \ee
  which are such that
  \be{l2}
	\begin{array}{l}
	u_t\in 
	\Big( \bigcup_{\mu\in (0,1)} C^{1+\mu,\frac{1+\mu}{2}}(\bom\times [0,\tm))\Big) \cap C^{2,1}(\bom\times (0,\tm)),
	\end{array}
  \ee
  that $\Theta\ge 0$ in $\Om\times (0,\tm)$, that $(u,\Theta)$ solves (\ref{0}) in the classical sense
  in $\Om\times (0,\tm)$, and which additionally are such that
  \bea{ext}
	\mbox{if $\tm<\infty$, \quad then \quad} 
	\limsup_{t\nearrow\tm} \Big\{ \|u_t(\cdot,t)\|_{W^{1,2}(\Om)} + \|\Theta(\cdot,t)\|_{L^\infty(\Om)}\Big\} = \infty.
  \eea
\end{lem}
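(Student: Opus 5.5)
The plan is to carry out the standard fixed-point construction for local existence, following the proof of Theorem 1.1 in \cite{claes_win}. The only changes are the boundary conditions (now $u_x=0$ and $\Theta_x=0$ on $\pO$) and the fact that $\gamma$ is merely known to satisfy (\ref{g1}).

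The first step is to rewrite the problem in terms of $v:=u_t$, treating $u$ as the time integral $u(\cdot,t)=u_0+\int_0^t v$. The first equation then reads
\bas
	v_t=(\gamma(\Theta)v_x)_x + a\Big(\gamma(\Theta)\Big(u_{0x}+\int_0^t v_x\Big)\Big)_x,
\eas
which is a nondegenerate parabolic equation for $v$ with the Neumann condition $v_x=0$. This equation is coupled to the heat equation for $\Theta$, whose source term $\gamma(\Theta)v_x^2$ is nonnegative.

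For the contraction argument, I fix $R>0$ exceeding the size of the data in (\ref{init}) and, for small $T>0$, consider the closed set $S$ of pairs $(\ov v,\ov\Theta)$ in $C^{1+\mu,\frac{1+\mu}{2}}(\bom\times[0,T])^2$ that have norm at most $R$ and satisfy $\ov\Theta\ge 0$. Given such a pair, I solve the two linear problems
\bas
	v_t=(\gamma(\ov\Theta)v_x)_x+a\Big(\gamma(\ov\Theta)\Big(u_{0x}+\int_0^t\ov v_x\Big)\Big)_x
	\qquad\mbox{and}\qquad
	\Theta_t=D\Theta_{xx}+\gamma(\ov\Theta)\ov v_x^2,
\eas
each with homogeneous Neumann data. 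Linear parabolic Schauder/$W^{2,1}_p$ theory is applicable here because $\gamma(\ov\Theta)\ge\min_{[0,R]}\gamma>0$ and because the coefficients are Hölder continuous. The compatibility conditions needed at $t=0$ are exactly those built into (\ref{init}), namely $(u_{0t})_x=0$ and $\Theta_{0x}=0$ on $\pO$. Since $\gamma\in C^2$, the map $(\ov v,\ov\Theta)\mapsto(v,\Theta)$ is Lipschitz, and a factor $T^\delta$ in the estimates yields self-mapping and contraction on $S$ for small $T$. Nonnegativity of $\Theta$ follows from the comparison principle, since the source is nonnegative and $\Theta_0\ge0$. Uniqueness follows from the same difference estimates. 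Interior parabolic regularity then upgrades the solution to the $C^{2,1}$ regularity in (\ref{l1}) and (\ref{l2}), which in particular gives the claimed regularity of $u$ through time integration.

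The final step, which is the main obstacle, is the extensibility criterion (\ref{ext}). The key point is that the local existence time $T$ must depend only on an upper bound for $\|u_t(\cdot,t_0)\|_{W^{1,2}}+\|\Theta(\cdot,t_0)\|_{L^\infty}$ (together with $\|u(\cdot,t_0)\|_{W^{1,2}}$, which is controlled by time integration), and not on higher norms of the data at the restart time. To achieve this I would rerun the fixed point in a weaker setting. I would place $v$ in $L^\infty((t_0,t_0+T);W^{1,2})$, which embeds into $L^\infty$ in one dimension, and use an energy estimate obtained by testing the $v$-equation with $-v_{xx}$. I would place $\Theta$ in $L^\infty$, relying on heat semigroup estimates with the source in $L^\infty_tL^1_x$. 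If these bounds stayed finite, the solution could be continued with a uniform time step, and $\tm<\infty$ would be contradicted. I expect this quantitative dependence of the local existence time on only the weak norms to be the technically delicate part. It requires that the ellipticity constant $\min_{[0,M]}\gamma$ and the Lipschitz constant $\max_{[0,M]}|\gamma'|$ are controlled by $M\ge\|\Theta\|_{L^\infty}$, which holds here.
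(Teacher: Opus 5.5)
Your construction of the local solution matches the paper, which only refers to the proof of Theorem 1.1 in \cite{claes_win}. One remark on it: with $v=u_t$ your linearized equation keeps the second-order term $\big(a\gamma(\ov\Theta)\int_0^t \ov v_x\big)_x$. For $\ov v$ merely in $C^{1+\mu,\frac{1+\mu}{2}}$ this requires divergence-form H\"older-flux estimates rather than the $W^{2,1}_p$ theory you cite. The paper's substitution $v=u_t+au$ turns it into the zero-order term $av-a^2u$.

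The genuine gap is in your derivation of (\ref{ext}). You want a restart time at $t_0$ depending only on $\|u_t(\cdot,t_0)\|_{W^{1,2}(\Om)}+\|\Theta(\cdot,t_0)\|_{L^\infty(\Om)}$, obtained by testing the $v$-equation with $-v_{xx}$. That test produces $-\io\gamma'(\Theta)\Theta_x v_x v_{xx}$ and $-a\io\gamma'(\Theta)\Theta_x u_x v_{xx}$, i.e.\ terms like $\io\Theta_x^2v_x^2$, and $\|\Theta\|_{L^\infty(\Om)}$ does not control these. Your closing remark overlooks that $\gamma'$ always enters multiplied by $\Theta_x$. A different weight does not help under (\ref{g1}) alone. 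Testing with $-(\phi(\Theta)v_x)_x$ removes the $\Theta_{xx}v_x^2$ terms only for $\phi=\frac{c}{\gamma+D}$, and the leftover $\Theta_x^2v_x^2$ coefficient is then the one from Lemma \ref{lem2}, whose sign needs (\ref{g2}).

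Heat smoothing from merely bounded restart data does not rescue this. It gives only $\|\Theta_x(\cdot,t)\|_{L^q(\Om)}\le C(t-t_0)^{-\frac12}\|\Theta(\cdot,t_0)\|_{L^\infty(\Om)}$. After Gagliardo--Nirenberg, the Gr\"onwall factor $\|\Theta_x\|_{L^q(\Om)}^{\frac{2q}{q-1}}$ is then bounded only by $(t-t_0)^{-\frac{q}{q-1}}$, which is not integrable at $t_0$. A steep front of width $\epsilon$ shows that $\int_{t_0}^{t_0+T}\|\Theta_x\|_{L^q(\Om)}^{\frac{2q}{q-1}}\,dt$ really grows like $\ln\frac{1}{\epsilon}$ at fixed $\|\Theta(\cdot,t_0)\|_{L^\infty(\Om)}$. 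Your contraction step in the weak setting fails for the same reason. The difference of two $v$-equations contains $\big((\gamma(\ov\Theta_1)-\gamma(\ov\Theta_2))v_{2x}\big)_x$, and estimating it at the $W^{1,2}$ level needs $\partial_x(\ov\Theta_1-\ov\Theta_2)$, which is unavailable when $\Theta$ is placed only in $L^\infty$.

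The way out is not to restart from weak data, but to use the assumed bounds on all of $(0,\tm)$ together with the regular data at $t=0$. Suppose $\tm<\infty$ while $\|u_t\|_{W^{1,2}(\Om)}+\|\Theta\|_{L^\infty(\Om)}$ stays bounded. Then $\gamma(\Theta)u_{xt}^2$ is bounded in $L^\infty((0,\tm);L^1(\Om))$. Apply Duhamel's formula from $t=0$, where $\Theta_{0x}\in L^\infty(\Om)$, using $\|\partial_x e^{\tau D\Delta}\vp\|_{L^q(\Om)}\le C(1+\tau^{-1+\frac{1}{2q}})\|\vp\|_{L^1(\Om)}$ for the Neumann heat semigroup. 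This gives $\sup_{t<\tm}\|\Theta_x(\cdot,t)\|_{L^q(\Om)}<\infty$ for every finite $q$.

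With $q=4$, your $-v_{xx}$ test then closes, via $\io\Theta_x^2v_x^2\le\|\Theta_x\|_{L^4(\Om)}^2\|v_x\|_{L^4(\Om)}^2$ and Gagliardo--Nirenberg, and yields $\int_0^{\tm}\io v_{xx}^2<\infty$. From there, parabolic H\"older and Schauder theory bootstrap to a bound on the strong norm of $(u_t,\Theta)$ used in your fixed point. That contradicts the blow-up alternative in the strong norm, which your fixed-point construction itself provides. Equivalently, you may let the restart time also depend on $\|\Theta_x(\cdot,t_0)\|_{L^q(\Om)}$, which is controlled in this way.
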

Throughout the sequel, unless otherwise stated we shall tacitly assume that $a>0$ and $D>0$ and that (\ref{init}) holds.
Whenever $\gamma$ has been fixed in such a way that (\ref{g1}) holds, we then let $\tm$, $u$ and $\Theta$
be as obtained in Lemma \ref{lem_loc}, and following classical precedents in the analysis of related
problems (cf., e.g., \cite{kaltenbacher_lasiecka_pospieszalska}), we define
\be{v}
	v:=u_t+au,
\ee
noting that the triple $(v,u,\Theta)$ then forms a classical solution of
\be{0v}
	\lball
	v_t = (\gamma(\Theta) v_x)_x + av - a^2 u,
	\qquad & x\in\Om, \ t\in (0,\tm), \\[1mm]
	u_t = v-au,
	\qquad & x\in\Om, \ t\in (0,\tm), \\[1mm]
	\Theta_t = D\Theta_{xx} + \gamma(\Theta) u_{xt}^2,
	\qquad & x\in\Om, \ t\in (0,\tm), \\[1mm]
	v_x=0, \quad u_x=0, \quad \Theta_x=0,
	\qquad & x\in\pO, \ t\in (0,\tm), \\[1mm]
	v(x,0)=v_0(x), \quad u(x,0)=u_0(x), \quad \Theta(x,0)=\Theta_0(x),
	\qquad & x\in\Om,
	\ear
\ee
according to (\ref{0}).\abs
The cornerstone of our analysis can already be found in the following statement which combines both parabolic equations
in (\ref{0v}) and relies on (\ref{g2}) to a crucial extent.
\begin{lem}\label{lem2}
  Assume (\ref{g1}) and (\ref{g2}).
  Then whenever $\eta>0$,
  \be{2.1}
	\frac{d}{dt} \io \frac{1}{\gamma(\Theta)+D} v_x^2
	+ \frac{2\gamma(0)}{\gamma(0)+D} \io v_{xx}^2
	\le (2+\eta)a \io \frac{1}{\gamma(\Theta)+D} v_x^2
	+ \frac{a^3}{\eta D} \io u_x^2
  \ee
  for all $t\in(0,\tm).$
\end{lem}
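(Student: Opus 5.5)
The plan is to compute the time derivative of $\io \vp(\Theta) v_x^2$ with $\vp(\xi):=\frac{1}{\gamma(\xi)+D}$ directly from (\ref{0v}), and to arrange the resulting terms so that all contributions containing $\Theta_x v_x v_{xx}$ cancel exactly. The remaining $\Theta_x^2 v_x^2$ term should then have a sign, by (\ref{g2}). We note at the outset that
\bas
	\vp'=-\frac{\gamma'}{(\gamma+D)^2}\le 0
	\qquad\mbox{and}\qquad
	\vp''=-\frac{\gamma''}{(\gamma+D)^2}+\frac{2\gamma'^2}{(\gamma+D)^3}.
\eas
Since the computation involves $v_{xxx}$, we will first carry it out for smooth solutions, or on $(\tau,\tm)$ after a standard regularization. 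Parabolic regularity for $v$, whose equation has Hölder coefficient $\gamma(\Theta)$, makes this legitimate.

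\emph{Step 1.} We differentiate to get
\bas
	\frac{d}{dt}\io \vp v_x^2=\io \vp'(\Theta)\Theta_t v_x^2+2\io \vp v_x\big[(\gamma v_x)_{xx}+av_x-a^2u_x\big].
\eas
In the first integral we insert $\Theta_t=D\Theta_{xx}+\gamma u_{xt}^2$. The part $\io\vp'\gamma u_{xt}^2v_x^2$ is nonpositive because $\vp'\le0$, so we drop it. The part $D\io\vp'\Theta_{xx}v_x^2$ we integrate by parts, which gives
\bas
	-D\io\vp''\Theta_x^2v_x^2-2D\io\vp'\Theta_xv_xv_{xx}.
\eas
In the second integral we integrate by parts once, which is allowed since $v_x=0$ on $\pO$. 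This yields $-2\io(\vp v_x)_x(\gamma v_x)_x$. Expanding it produces
\bas
	-2\io\vp\gamma v_{xx}^2-2\io(\vp\gamma'+\vp'\gamma)\Theta_xv_xv_{xx}-2\io\vp'\gamma'\Theta_x^2v_x^2.
\eas

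\emph{Step 2, the key algebra.} Collecting the cross terms, their coefficient is
\bas
	-2\big[\vp\gamma'+\vp'(\gamma+D)\big]=-2\Big[\frac{\gamma'}{\gamma+D}-\frac{\gamma'}{\gamma+D}\Big]=0.
\eas
So they cancel identically; this is precisely why the weight $\frac{1}{\gamma+D}$ is chosen. The $\Theta_x^2v_x^2$ terms carry the factor $-(2\vp'\gamma'+D\vp'')$. Using the formulas above, this factor equals
\bas
	\frac{D(\gamma+D)\gamma''+2\gamma\gamma'^2}{(\gamma+D)^3},
\eas
which is $\le0$ by (\ref{g2}), so these terms can be dropped. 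Verifying this identity is the step I expect to require the most care, though it is purely computational. Finally, $\vp\gamma=\frac{\gamma}{\gamma+D}$ is increasing in $\gamma$ and $\gamma\ge\gamma(0)$ by (\ref{g1}), which gives the dissipative term
\bas
	-\frac{2\gamma(0)}{\gamma(0)+D}\io v_{xx}^2.
\eas

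\emph{Step 3.} The lower-order terms are $2a\io\vp v_x^2-2a^2\io\vp v_xu_x$. By Young's inequality,
\bas
	2a^2|\vp v_xu_x|\le \eta a\,\vp v_x^2+\frac{a^3}{\eta}\vp u_x^2,
\eas
and $\vp\le\frac1D$. Together these give the right-hand side of (\ref{2.1}) and complete the proof.
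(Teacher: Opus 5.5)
Your proof is correct and is essentially the paper's argument. The only difference is the direction of one integration by parts: you integrate $D\io\vp'(\Theta)\Theta_{xx}v_x^2$ by parts so that the $\Theta_xv_xv_{xx}$ terms cancel, whereas the paper integrates the $\Theta_xv_xv_{xx}$ terms by parts so that the $\Theta_{xx}v_x^2$ terms cancel. Both routes give the same coefficient $\frac{D(\gamma+D)\gamma''+2\gamma\gamma'^2}{(\gamma+D)^3}$ in front of $\Theta_x^2v_x^2$, which (\ref{g2}) makes nonpositive, and the remaining steps (dropping the $u_{xt}^2$ term, the lower bound for $\frac{\gamma(\Theta)}{\gamma(\Theta)+D}$, Young's inequality with $\vp\le\frac1D$) coincide.
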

\proof
  According to an integration by parts using (\ref{0v}),
  for all $t\in (0,\tm)$ we have
  \bea{2.2}
	\frac{d}{dt} \io \frac{1}{\gamma(\Theta)+D} v_x^2
	&=& 2 \io \frac{1}{\gamma(\Theta)+D} v_x v_{xt}
	- \io \frac{\gamma'(\Theta)}{(\gamma(\Theta)+D)^2} \Theta_t v_x^2 \nn\\
	&=& - 2 \io \Big( \frac{1}{\gamma(\Theta)+D} v_x\Big)_x \cdot \big( \gamma(\Theta) v_x\big)_x \nn\\
	& & + 2 \io \frac{1}{\gamma(\Theta)+D} v_x \cdot (av-a^2 u)_x \nn\\
	& & - D \io \frac{\gamma'(\Theta)}{(\gamma(\Theta)+D)^2} \Theta_{xx} v_x^2
	- \io \frac{\gamma'(\Theta) \gamma(\Theta)}{(\gamma(\Theta)+D)^2} u_{xt}^2 v_x^2,
  \eea
  where expanding yields
  \bea{2.3}
	& & \hs{-20mm}
	- 2 \io \Big( \frac{1}{\gamma(\Theta)+D} v_x\Big)_x \cdot \big( \gamma(\Theta) v_x\big)_x \nn\\
	&=& - 2 \io \Big( \frac{1}{\gamma(\Theta)+D} v_{xx} - \frac{\gamma'(\Theta)}{(\gamma(\Theta)+D)^2} \Theta_x v_x\Big) \cdot
		\big( \gamma(\Theta) v_{xx} + \gamma'(\Theta) \Theta_x v_x\big) \nn\\
	&=& - 2 \io \frac{\gamma(\Theta)}{\gamma(\Theta)+D} v_{xx}^2
	- 2 \io \frac{\gamma'(\Theta)}{\gamma(\Theta)+D} \Theta_x v_x v_{xx} \nn\\
	& & + 2 \io \frac{\gamma(\Theta)\gamma'(\Theta)}{(\gamma(\Theta)+D)^2} \Theta_x v_x v_{xx}
	+ 2 \io \frac{\gamma'^2(\Theta)}{(\gamma(\Theta)+D)^2} \Theta_x^2 v_x^2
	\qquad \mbox{for all } t\in (0,\tm).
  \eea
  Here, one further integration by parts shows that
  for all $t\in (0,\tm)$,
  \bas
	& & \hs{-20mm}
	- 2 \io \frac{\gamma'(\Theta)}{\gamma(\Theta)+D} \Theta_x v_x v_{xx} 
	+ 2 \io \frac{\gamma(\Theta)\gamma'(\Theta)}{(\gamma(\Theta)+D)^2} \Theta_x v_x v_{xx} \\
	&=& - \io \Big\{ \frac{\gamma'(\Theta)}{\gamma(\Theta)+D} - \frac{\gamma(\Theta)\gamma'(\Theta)}{(\gamma(\Theta)+D)^2} 
		\Big\} \cdot \Theta_x (v_x^2)_x \\
	&=& \io \Big\{ \frac{\gamma'(\Theta)}{\gamma(\Theta)+D} - \frac{\gamma(\Theta)\gamma'(\Theta)}{(\gamma(\Theta)+D)^2}
		\Big\} \cdot \Theta_{xx} v_x^2
	+ \io \Big( \frac{\gamma'}{\gamma+D} - \frac{\gamma \gamma'}{(\gamma+D)^2} \Big)'(\Theta) \cdot\Theta_x^2 v_x^2,
  \eas
  whence (\ref{2.3}) implies that
  \bea{2.4}
	& & \hs{-20mm}
	- 2 \io \Big( \frac{1}{\gamma(\Theta)+D} v_x\Big)_x \cdot \big( \gamma(\Theta) v_x\big)_x
	- D \io \frac{\gamma'(\Theta)}{(\gamma(\Theta)+D)^2} \Theta_{xx} v_x^2 \nn\\
	&=& - 2 \io \frac{\gamma(\Theta)}{\gamma(\Theta)+D} v_{xx}^2 \nn\\
	& & + 2 \io \Big\{ - D \frac{\gamma'(\Theta)}{(\gamma(\Theta)+D)^2} + \frac{\gamma'(\Theta)}{\gamma(\Theta)+D}
		- \frac{\gamma(\Theta)\gamma'(\Theta)}{(\gamma(\Theta)+D)^2} \Big\} \cdot \Theta_{xx} v_x^2 \nn\\
	& & + \io \bigg\{ 2\frac{\gamma'^2(\Theta)}{(\gamma(\Theta)+D)^2}
		+ \Big( \frac{\gamma'}{\gamma+D} - \frac{\gamma \gamma'}{(\gamma+D)^2}\Big)'(\Theta)\bigg\}
		\cdot \Theta_x^2 v_x^2
	\qquad \mbox{for all } t\in (0,\tm).
  \eea
  Since
  \bas
	& & \hs{-20mm}
	- D\frac{\gamma'(\xi)}{(\gamma(\xi)+D)^2} + \frac{\gamma'(\xi)}{\gamma(\xi)+D} 
		- \frac{\gamma(\xi)\gamma'(\xi)}{(\gamma(\xi)+D)^2} \\
	&=& \frac{-D\gamma'(\xi) + \gamma'(\xi) (\gamma(\xi)+D) - \gamma(\xi)\gamma'(\xi)}{(\gamma(\xi)+D)^2}
	= 0
	\qquad \mbox{for all } \xi>0,
  \eas
  since (\ref{g2}) ensures that
  \bas
	2\cdot\frac{\gamma'^2(\xi)}{(\gamma(\xi)+D)^2}
	+ \Big(\frac{\gamma'}{\gamma+D} - \frac{\gamma\gamma'}{(\gamma+D)^2}\Big)'(\xi)
	&=& 2\cdot\frac{\gamma'^2(\xi)}{(\gamma(\xi)+D)^2}
	+ \Big(\frac{D\gamma'}{(\gamma+D)^2}\Big)'(\xi) \\
	&=& \frac{2(\gamma(\xi)+D)\gamma'^2(\xi)}{(\gamma(\xi)+D)^3}
	+ \frac{D(\gamma(\xi)+D)\gamma''(\xi) - 2D\gamma'^2(\xi)}{(\gamma(\xi)+D)^3} \\
	&=& \frac{D(\gamma(\xi)+D) \gamma''(\xi) + 2\gamma(\xi)\gamma'^2(\xi)}{(\gamma(\xi)+D)^3} \\[2mm]
	&\le& 0
	\qquad \mbox{for all } \xi>0,
  \eas
  and since
  \bas
	- \io \frac{\gamma'(\Theta)\gamma(\Theta)}{(\gamma(\Theta)+D)^2} u_{xt}^2 v_x^2 \le 0
	\qquad \mbox{for all } t\in (0,\tm)
  \eas
  due to the nonnegativity of $\gamma'$ and $\gamma$, from (\ref{2.2}) and (\ref{2.4}) we thus infer that
  \bea{2.5}
	\hs{-8mm}
	\frac{d}{dt} \io \frac{1}{\gamma(\Theta)+D} v_x^2
	+ 2\io \frac{\gamma(\Theta)}{\gamma(\Theta)+D} v_{xx}^2
	&\le& 2 \io \frac{1}{\gamma(\Theta)+D} v_x \cdot (av-a^2u)_x \nn\\
	&=& 2a \io \frac{1}{\gamma(\Theta)+D} v_x^2
	- 2a^2 \io \frac{1}{\gamma(\Theta)+D} u_x v_x
  \eea
  for all $t\in (0,\tm)$.
  Using Young's inequality to confirm that
  \bas
	-2a^2 \io \frac{1}{\gamma(\Theta)+D} u_x v_x
	\le \eta a \io \frac{1}{\gamma(\Theta)+D} v_x^2
	+ \frac{a^3}{\eta} \io \frac{1}{\gamma(\Theta)+D} u_x^2
	\qquad \mbox{for all $t\in (0,\tm)$ and } \eta>0,
  \eas
  and simply estimating $\frac{1}{\gamma(\Theta)+D} \le \frac{1}{D}$ as well as 
  $\frac{\gamma(\Theta)}{\gamma(\Theta)+D} \ge \frac{\gamma(0)}{\gamma(0)+D}$, from (\ref{2.5}) we immediately obtain (\ref{2.1}).
\qed
The rightmost summand in (\ref{2.1}) can be controlled by using the ODE in (\ref{0v}) in a straightforward manner:
\begin{lem}\label{lem3}
  If (\ref{g1}) holds, and if $\eta>0$, then
  \be{3.1}
	\frac{d}{dt} \io u_x^2
	+ 2(1-\eta)a \io u_x^2 \le \frac{1}{2\eta a} \io v_x^2
	\qquad \mbox{for all } t\in (0,\tm).
  \ee
\end{lem}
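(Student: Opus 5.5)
We differentiate $\io u_x^2$ in time and use the ODE $u_t = v-au$ from (\ref{0v}). Differentiating this identity with respect to $x$ gives $u_{xt} = v_x - a u_x$ in $\Om\times(0,\tm)$. This is legitimate in view of the regularity of $u$, $u_t$ and $v=u_t+au$ recorded in (\ref{l1}) and (\ref{l2}). Consequently,
\bas
	\frac{d}{dt} \io u_x^2 = 2\io u_x u_{xt} = 2\io u_x v_x - 2a \io u_x^2
	\qquad \mbox{for all } t\in (0,\tm).
\eas
No integration by parts is needed, so boundary terms play no role here. Likewise (\ref{g1}) enters only through the existence and regularity statement of Lemma \ref{lem_loc}.

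Next we absorb the mixed term by Young's inequality, with weights chosen so that exactly the constants in (\ref{3.1}) appear. For any $\eta>0$ we have
\bas
	2\io u_x v_x \le 2\eta a \io u_x^2 + \frac{1}{2\eta a} \io v_x^2 ,
\eas
since $2pq \le 2\eta a p^2 + \frac{1}{2\eta a} q^2$ by the elementary inequality $2pq\le \eps p^2+\eps^{-1}q^2$ with $\eps=2\eta a$. Inserting this bound and moving $2a\io u_x^2 - 2\eta a\io u_x^2 = 2(1-\eta)a\io u_x^2$ to the left-hand side yields (\ref{3.1}).

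There is no real obstacle. The only point needing care is justifying the differentiation under the integral. For this we note that on each compact subinterval of $(0,\tm)$ the function $u_x$ is continuously differentiable in $t$: indeed $u_{xt}=(u_t)_x$ is continuous on $\bom\times(0,\tm)$ because $u_t\in C^{2,1}(\bom\times(0,\tm))$.
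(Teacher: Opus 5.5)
Your proposal is correct and is essentially the paper's proof: you differentiate $\io u_x^2$ using $u_{xt}=v_x-au_x$ from the second equation in (\ref{0v}), then apply Young's inequality with the same weights. The paper does the identical computation with the factor $\frac{1}{2}$ on the left, i.e.\ $\io u_x v_x \le \eta a \io u_x^2 + \frac{1}{4\eta a}\io v_x^2$, which after multiplying by two is exactly your estimate.
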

\proof
  We only need to use the second equation in (\ref{0v}) along with Young's inequality to see that
  \bas
	\frac{1}{2} \frac{d}{dt} \io u_x^2
	= \io u_x\cdot (v-au)_x
	= - a\io u_x^2 + \io u_x v_x
	\qquad \mbox{for all } t\in (0,\tm),
  \eas
  and that here
  \bas
	\io u_x v_x \le \eta a \io u_x^2 
	+ \frac{1}{4\eta a} \io v_x^2
  \eas
  for all $t\in (0,\tm)$.
\qed
The above two lemmata can now be developed toward an energy-like inequality 
by using the Poincar\'e inequality (\ref{P}), here in a yet fairly rough manner in the sense that the optimal
constant therein does not play an essential role in the following argument.
\begin{lem}\label{lem4}
  Assume (\ref{g1}) and (\ref{g2}) . Then for each $T>0$ there exists $C(T)>0$ such that
  \be{4.1}
	\io \frac{1}{\gamma(\Theta(\cdot,t))+D} v_x^2(\cdot,t)
	\le C(T)
	\qquad \mbox{for all } t\in (0,T)\cap (0,\tm)
  \ee
  and
  \be{4.2}
	\io u_x^2(\cdot,t) 
	\le C(T)
	\qquad \mbox{for all } t\in (0,T)\cap (0,\tm)
  \ee
  as well as
  \be{4.3}
	\int_0^t \io v_{xx}^2 \le C(T)
	\qquad \mbox{for all } t\in (0,T)\cap (0,\tm).
  \ee
\end{lem}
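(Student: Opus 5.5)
We combine Lemma \ref{lem2} and Lemma \ref{lem3} into a linear differential inequality for
\[
	y(t):=\io \frac{1}{\gamma(\Theta)+D} v_x^2 + B \io u_x^2, \qquad t\in (0,\tm),
\]
with a suitable constant $B>0$, and then apply Gr\"onwall's lemma on $(0,T)\cap(0,\tm)$. The point is to control the term $\io v_x^2$ appearing on the right of (\ref{3.1}). This term cannot be bounded directly by $\io \frac{1}{\gamma(\Theta)+D} v_x^2$, since $\gamma(\Theta)$ may be large. Instead we absorb it into the dissipative term $\io v_{xx}^2$ from (\ref{2.1}). Because $v_x=0$ on $\pO$, we have $v_x\in W_0^{1,2}(\Om)$. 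The Poincar\'e inequality (\ref{P}) therefore gives
\[
	\io v_x^2 \le \frac{1}{\lam_1}\io v_{xx}^2 \qquad \mbox{for all } t\in (0,\tm).
\]

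Concretely, we take $\eta=1$ in both lemmata. Adding $B$ times (\ref{3.1}) to (\ref{2.1}) yields
\[
	y'(t) + \frac{2\gamma(0)}{\gamma(0)+D}\io v_{xx}^2
	\le 3a \io \frac{1}{\gamma(\Theta)+D} v_x^2 + \frac{a^3}{D}\io u_x^2 + \frac{B}{2a\lam_1}\io v_{xx}^2 .
\]
The $-2(1-\eta)a$ term from (\ref{3.1}) vanishes for $\eta=1$ and can simply be dropped in any case. We now fix $B>0$ so small that
\[
	\frac{B}{2a\lam_1}\le \frac{\gamma(0)}{\gamma(0)+D}.
\]
Then
\[
	y' + \frac{\gamma(0)}{\gamma(0)+D}\io v_{xx}^2 \le 3a\io\frac{1}{\gamma(\Theta)+D}v_x^2 + \frac{a^3}{D}\io u_x^2 \le c_1 y,
	\qquad c_1:=\max\Big\{3a,\frac{a^3}{DB}\Big\}.
\]
Dropping the dissipative term, Gr\"onwall's lemma gives $y(t)\le y(0)e^{c_1 t}\le y(0)e^{c_1T}$ for $t<T$. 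The initial value $y(0)$ is finite by (\ref{init}), since $v_0=u_{0t}+au_0\in W^{1,2}(\Om)$ and $u_{0x}\in L^2(\Om)$. This proves (\ref{4.1}) and (\ref{4.2}); for (\ref{4.2}) we divide by $B$.

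For (\ref{4.3}) we integrate the last differential inequality in time before discarding the dissipative term:
\[
	\frac{\gamma(0)}{\gamma(0)+D}\int_0^t\io v_{xx}^2 \le y(0) + c_1\int_0^t y \le y(0)+c_1 T y(0)e^{c_1T}.
\]
The only delicate point is justifying the differentiation of $y$ and the validity of Lemmas \ref{lem2} and \ref{lem3} up to $t\to 0$. Since the solution is only $C^{2,1}$ for positive times, we would run the argument on $(t_0,t)$ with $t_0>0$ and let $t_0\searrow 0$. This uses the continuity of $v_x$, $u_x$ and $\Theta$ at $t=0$ in $\bom$ provided by (\ref{l1}) and (\ref{l2}), which makes $y$ continuous at $0$. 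Apart from this, the step requiring care is the choice of $B$ that makes the $\io v_x^2$ term absorbable, which the Poincar\'e argument above settles.
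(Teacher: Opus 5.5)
Your proof is correct and is essentially the paper's argument. You combine Lemma \ref{lem2} (with $\eta=1$) and Lemma \ref{lem3} into a differential inequality for $y=\io\frac{1}{\gamma(\Theta)+D}v_x^2+B\io u_x^2$, absorb $\io v_x^2$ into the dissipation via the Poincar\'e inequality for $v_x\in W_0^{1,2}(\Om)$ after taking $B$ small, and conclude by Gr\"onwall's inequality and a time integration. The differences are cosmetic: the paper takes $\eta=\frac12$ in Lemma \ref{lem3} and uses the constant $3a+\frac{a^3}{BD}$, and your limit $t_0\searrow 0$ based on the continuity from (\ref{l1})--(\ref{l2}) spells out a detail the paper leaves implicit.
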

\proof
  Taking $\lam_1>0$ as in the Poincar\'e inequality (\ref{P}), we fix $B>0$ such that with $c_1:=\frac{2\gamma(0)}{\gamma(0)+D}$
  we have
  \be{4.4}
	B \le \frac{c_1 a\lam_1}{2},
  \ee
  and let
  \be{4.5}
	y(t) := \io \frac{1}{\gamma(\Theta(\cdot,t))+D} v_x^2(\cdot,t)
	+ B \io u_x^2(\cdot,t),
	\qquad t\in [0,\tm).
  \ee
  Then a combination of (\ref{2.1}) with (\ref{3.1}) shows that
  \bea{4.6}
	\hs{-2mm}
	y'(t) + c_1 \io v_{xx}^2 + Ba \io u_x^2 
	\le 3a \io \frac{1}{\gamma(\Theta)+D} v_x^2
	+ \frac{a^3}{D} \io u_x^2 
	+ \frac{B}{a} \io v_x^2
	\quad \mbox{for all } t\in (0,\tm),
  \eea
  where by (\ref{P}) and (\ref{4.4}),
  \bas
	\frac{B}{a} \io v_x^2 
	\le \frac{B}{a\lam_1} \io v_{xx}^2
	\le \frac{c_1}{2} \io v_{xx}^2
	\qquad \mbox{for all } t\in (0,\tm).
  \eas
  Since furthermore
  \bas
	3a \io \frac{1}{\gamma(\Theta)+D} v_x^2
	+ \frac{a^3}{D} \io u_x^2
	\le \Big( 3a+\frac{a^3}{BD}\Big) \cdot y(t)
	\qquad \mbox{for all } t\in (0,\tm)
  \eas
  according to (\ref{4.5}), from (\ref{4.6}) we obtain that
  \be{4.7}
	y'(t) + \frac{c_1}{2} \io v_{xx}^2 \le c_2 y(t)
	\qquad \mbox{for all } t\in (0,\tm)
  \ee
  with $c_2:=3a+\frac{a^3}{BD}$.
  By Gr\"onwall's inequality, this implies that
  \bas
	y(t) \le c_3:=y(0) e^{c_2 T}
	\qquad \mbox{for all } t\in [0,T)\cap [0,\tm),
  \eas
  and a subsequent integration in (\ref{4.7}) leads to the inequality
  \bas
	\frac{c_1}{2} \int_0^t \io v_{xx}^2
	\le y(0) + c_2 \int_0^t y(s) ds
	\le c_3 + c_2 c_3 T
	\qquad \mbox{for all } t\in [0,T)\cap [0,\tm),
  \eas
  whence the proof can be completed by recalling the definition of $y$.
\qed
A simple embedding property develops the information on dissipation contained in (\ref{4.3}) into an estimate 
for first-order quantities:
\begin{lem}\label{lem5}
  Suppose that (\ref{g1}) and (\ref{g2}) are satisfied. Than for all $T>0$ there exists $C(T)>0$ such that
  \be{5.1}
	\int_0^t \|v_x(\cdot,s)\|_{L^\infty(\Om)}^2 ds \le C(T)
	\qquad \mbox{for all } t\in (0,T)\cap (0,\tm).
  \ee
\end{lem}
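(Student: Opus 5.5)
We derive this from (\ref{4.3}) by way of a one-dimensional embedding inequality. The Neumann condition $v_x=0$ on $\pO$ from (\ref{0v}) is the key point: it means $v_x(\cdot,s)$ vanishes at the endpoints for each $s\in(0,\tm)$. Hence, for every $x\in\Om$, taking an endpoint $x_0\in\pO$ and using the Cauchy--Schwarz inequality,
\bas
	|v_x(x,s)| = \Big| \int_{x_0}^x v_{xx}(y,s)\,dy \Big| \le |\Om|^{\frac{1}{2}} \Big( \io v_{xx}^2(\cdot,s)\Big)^{\frac{1}{2}} .
\eas
This gives $\|v_x(\cdot,s)\|_{L^\infty(\Om)}^2 \le |\Om| \io v_{xx}^2(\cdot,s)$ for all $s\in(0,\tm)$. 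The estimate needs no $L^2$ contribution from $v_x$ and no Poincar\'e constant beyond this elementary bound. One could alternatively combine the Gagliardo--Nirenberg inequality with (\ref{4.1}) and the bound $\frac{1}{\gamma(\Theta)+D}\ge$ something, but that would need an upper bound on $\gamma(\Theta)$, which is not yet available. The boundary-vanishing route avoids this issue entirely.

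Integrating the pointwise-in-time bound over $(0,t)$ and invoking (\ref{4.3}) from Lemma \ref{lem4} then yields
\bas
	\int_0^t \|v_x(\cdot,s)\|_{L^\infty(\Om)}^2\,ds \le |\Om|\int_0^t\io v_{xx}^2 \le |\Om|\,C(T)
\eas
for all $t\in(0,T)\cap(0,\tm)$. This is (\ref{5.1}) after renaming the constant.

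The only point requiring care is regularity: we need $v_{xx}(\cdot,s)\in L^2(\Om)$ and the classical boundary trace $v_x=0$ for every $s\in(0,\tm)$. Both follow from $u,u_t\in C^{2,1}(\bom\times(0,\tm))$ in Lemma \ref{lem_loc}, since $v=u_t+au$ and the boundary condition in (\ref{0v}) holds pointwise. At $s=0$ only a null set is affected, so the time integral is unaffected. No genuine obstacle is expected.
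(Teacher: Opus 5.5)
Your proposal is correct and follows the paper's argument. The paper likewise applies an embedding inequality $\|\varphi\|_{L^\infty(\Omega)}\le c_1\|\varphi_x\|_{L^2(\Omega)}$ for $\varphi\in W_0^{1,2}(\Omega)$ to $\varphi=v_x$, using $v_x=0$ on $\partial\Omega$, and then integrates in time against (\ref{4.3}); your fundamental-theorem-of-calculus computation just makes the constant explicit ($c_1^2=|\Omega|$), and you cite (\ref{4.3}) correctly where the paper's text mistakenly refers to (\ref{4.2}).
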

\proof
  Since by continuity of the embedding $W^{1,2}(\Om)\hra L^\infty(\Om)$ we can find $c_1>0$ such that
  \bas
	\|\vp\|_{L^\infty(\Om)} \le c_1\|\vp_x\|_{L^2(\Om)}
	\qquad \mbox{for all } \vp\in W_0^{1,2}(\Om),
  \eas
  it follows that
  \bas
	\int_0^t \|v_x(\cdot,s)\|_{L^\infty(\Om)}^2 ds
	\le c_1^2 \int_0^t \io v_{xx}^2
	\qquad \mbox{for all } t\in (0,\tm),
  \eas
  so that (\ref{5.1}) results from (\ref{4.2}).
\qed
This in turn ensures genuine $L^\infty$ boundedness of $u_x$ throughout finite time intervals:
\begin{lem}\label{lem6}
  Assuming (\ref{g1}) and (\ref{g2}), for each $T>0$ we can fix $C(T)>0$ in such a way that
  \be{6.1}
	|u_x(x,t)| \le C(T)
	\qquad \mbox{for all $x\in\Om$ and } t\in (0,T)\cap (0,\tm).
  \ee
\end{lem}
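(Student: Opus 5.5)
The plan is to use the second equation in (\ref{0v}) pointwise. Differentiating $u_t = v - au$ in $x$ gives the linear ODE
\bas
	u_{xt} = v_x - a u_x
	\qquad \mbox{in } \Om\times (0,\tm),
\eas
which holds classically thanks to the regularity in (\ref{l1}) and (\ref{l2}). For fixed $x\in\bom$, the variation-of-constants formula then yields
\bas
	u_x(x,t) = e^{-at} u_{0x}(x) + \int_0^t e^{-a(t-s)} v_x(x,s)\, ds
	\qquad \mbox{for all } t\in (0,\tm).
\eas

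Next, the two contributions are estimated separately. Since $u_0\in W^{3,2}(\Om)\hra C^1(\bom)$, the first term is bounded by $\|u_{0x}\|_{L^\infty(\Om)}$. For the second, $e^{-a(t-s)}\le 1$ and the Cauchy--Schwarz inequality give
\bas
	\Big|\int_0^t e^{-a(t-s)} v_x(x,s)\, ds\Big|
	\le \int_0^t \|v_x(\cdot,s)\|_{L^\infty(\Om)} ds
	\le \sqrt{T}\cdot \Big(\int_0^t \|v_x(\cdot,s)\|_{L^\infty(\Om)}^2 ds\Big)^{1/2}
\eas
for $t<T$. Lemma \ref{lem5} bounds the last integral by $C(T)$, so (\ref{6.1}) follows with constant $\|u_{0x}\|_{L^\infty(\Om)}+\sqrt{T\,C(T)}$.

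The only delicate point is justifying the pointwise ODE up to $t=0$. This requires $u_x$ to be continuous on $\bom\times[0,\tm)$ and $u_{xt}$ to exist classically for $t>0$. Both follow from (\ref{l1}) and (\ref{l2}): $u_t=v-au\in C^{2,1}$ for $t>0$, and $u_x$ is continuous up to $t=0$. Hence the integral formula can first be derived on $[\tau,t]$ and then passed to the limit $\tau\searrow0$. Beyond this, no real obstacle is expected.
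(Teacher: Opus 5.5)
Your proposal is correct and follows the paper's proof essentially verbatim: you use the variation-of-constants representation of $u_x$ coming from $u_{xt}=v_x-au_x$, the bound $e^{-a(t-s)}\le 1$, Cauchy--Schwarz in time, and then Lemma \ref{lem5}. Your extra justification of the representation near $t=0$ is a harmless addition. In fact, (\ref{l1}) and (\ref{l2}) already make $v_x$ continuous on $\bom\times[0,\tm)$, so the formula holds directly without the limit $\tau\searrow 0$.
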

\proof
  In view of (\ref{0v}), we may represent $u_x$ according to
  \bas
	u_x(x,t)=e^{-at} u_{0x}(x) + \int_0^t e^{-a(t-s)} v_x(x,s) ds
  \eas
  for $x\in\Om$ and $t\in (0,\tm)$.
  By means of the Cauchy-Schwarz inequality, thanks to the nonnegativity of $a$ we can therefore estimate
  \bas
	\|u_x(\cdot,t)\|_{L^\infty(\Om)}
	&\le& e^{-at} \|u_{0x}\|_{L^\infty(\Om)} 
	+ \int_0^t e^{-a(t-s)} \|v_x(\cdot,s)\|_{L^\infty(\Om)} ds \\
	&\le& \|u_{0x}\|_{L^\infty(\Om)}
	+ \int_0^t \|v_x(\cdot,s)\|_{L^\infty(\Om)} ds \\
	&\le& \|u_{0x}\|_{L^\infty(\Om)}
	+ \bigg\{ \int_0^t \|v_x(\cdot,s)\|_{L^\infty(\Om)}^2 ds \bigg\}^\frac{1}{2} \cdot t^\frac{1}{2}
	\qquad \mbox{for all } t\in (0,\tm),
  \eas
  whereby (\ref{6.1}) becomes a consequence of Lemma \ref{lem5}.
\qed
As a combination of Lemma \ref{lem5} and Lemma \ref{lem6} suitably controls the heat source in (\ref{0v}),
a comparison argument can be applied so as to derive a pointwise upper bound also for the temperature variable.
\begin{lem}\label{lem7}
  Assume (\ref{g1}) and (\ref{g2}).
  Then for all $T>0$ there exists $C(T)>0$ such that
  \be{7.1}
	\Theta(x,t)\le C(T)
	\qquad \mbox{for all $x\in\Om$ and } t\in (0,T)\cap (0,\tm).
  \ee
\end{lem}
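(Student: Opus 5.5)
Throughout we fix $T>0$ and write $\Theta_t=D\Theta_{xx}+h(x,t)$ with $h:=\gamma(\Theta)u_{xt}^2\ge 0$. The aim is to bound this source in $L^1((0,T);L^\infty(\Om))$ and then compare $\Theta$ with a spatially homogeneous supersolution. Since $u_{xt}=v_x-au_x$, Lemmas \ref{lem5} and \ref{lem6} give
\bas
	\int_0^t \|u_{xt}(\cdot,s)\|_{L^\infty(\Om)}^2 ds
	\le 2\int_0^t \|v_x(\cdot,s)\|_{L^\infty(\Om)}^2 ds + 2a^2 C^2(T)\, T
	\le c_1(T)
	\qquad \mbox{for all } t\in (0,T)\cap(0,\tm).
\eas
The factor $\gamma(\Theta)$ in $h$ is not yet controlled, because $\gamma$ may be unbounded, as in Remark \ref{rem99} ii). This is the main obstacle, and I would handle it with a nonlinear ODE comparison that exploits concavity of $\gamma$.

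Condition (\ref{g2}) together with $\gamma'\ge0$ gives $\gamma''\le 0$, so $\gamma$ is concave. Hence $\gamma(\xi)\le \gamma(0)+\gamma'(0)\xi$ for all $\xi\ge0$, so $\gamma$ grows at most linearly. Let $M(t):=\|\Theta(\cdot,t)\|_{L^\infty(\Om)}$ and $k(t):=\|u_{xt}(\cdot,t)\|_{L^\infty(\Om)}^2$, which lies in $L^1(0,T)$ by the bound above. Define $\ov\Theta(t)$ as the solution of
\bas
	\ov\Theta'(t)=\gamma(\ov\Theta(t))\, k(t), \qquad \ov\Theta(0)=\|\Theta_0\|_{L^\infty(\Om)}.
\eas
Equivalently, $\int_{\ov\Theta(0)}^{\ov\Theta(t)}\frac{d\xi}{\gamma(\xi)}=\int_0^t k(s)\,ds$. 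Linear growth gives $\int_0^\infty\frac{d\xi}{\gamma(\xi)}=\infty$, so $\ov\Theta$ stays finite on $[0,T]$. Explicitly, by Grönwall with the linear bound,
\bas
	\ov\Theta(t)\le \Big(\ov\Theta(0)+\frac{\gamma(0)}{\gamma'(0)}\Big)e^{\gamma'(0)\int_0^t k}
\eas
when $\gamma'(0)>0$, and $\ov\Theta(t)=\ov\Theta(0)+\gamma(0)\int_0^t k$ when $\gamma'(0)=0$.

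Next I would check that $\ov\Theta$, viewed as a function of $(x,t)$, is a supersolution. Using that $\gamma$ is nondecreasing, whenever $\Theta\le\ov\Theta$ we have
\bas
	\ov\Theta_t - D\ov\Theta_{xx} = \gamma(\ov\Theta)k \ge \gamma(\ov\Theta)u_{xt}^2 .
\eas
Comparison for the semilinear problem $\Theta_t=D\Theta_{xx}+\gamma(\Theta)u_{xt}^2$ with Neumann data then yields $\Theta\le\ov\Theta$. Here $u_{xt}^2$ is treated as a given coefficient, and $\gamma$ is locally Lipschitz, so the classical comparison principle applies to the continuous solution on $[0,t]$ for each $t<\tm$.

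One regularity caveat concerns the ODE. Since $k$ is only $L^1$, $\ov\Theta$ is absolutely continuous rather than $C^1$. To keep the comparison classical, I would replace $k$ by a continuous majorant $k_\varepsilon\ge k$ with $\int_0^T k_\varepsilon\le c_1(T)+1$; this is available because $u_{xt}$ is continuous on $\bom\times[0,t]$ for $t<\tm$, so $k$ itself is continuous there. Alternatively, one can argue directly via a maximum-point argument for $\Theta-\ov\Theta$ with $\ov\Theta$ enlarged by a small $\varepsilon$. Either route yields (\ref{7.1}) with $C(T)$ given by the explicit bound above and $\int_0^t k\le c_1(T)$, which is independent of $t<\tm$.
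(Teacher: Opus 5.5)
Your proposal is correct and is essentially the paper's proof: you bound the heat source in $L^1((0,T);L^\infty(\Om))$ via $u_{xt}=v_x-au_x$ and Lemmas \ref{lem5} and \ref{lem6}, deduce $\int_0^\infty\frac{d\xi}{\gamma(\xi)}=\infty$ from the concavity implied by (\ref{g2}), and compare $\Theta$ with the spatially homogeneous solution of $z'=k(t)\gamma(z)$, $z(0)=\|\Theta_0\|_{L^\infty(\Om)}$. The differences are cosmetic: the paper uses the majorant $2\|v_x\|_{L^\infty(\Om)}^2+2a^2\|u_x\|_{L^\infty(\Om)}^2$ in place of $k$ and writes $z=\phi^{-1}(\int_0^t h)$ instead of your explicit Gr\"onwall bound, and your regularity caveat is unnecessary because Lemma \ref{lem_loc} makes $u_{xt}$ continuous on $\bom\times[0,\tm)$, so $k$ is continuous and $\ov\Theta\in C^1([0,\tm))$ directly (likewise the monotonicity of $\gamma$ plays no role in the supersolution inequality $\gamma(\ov\Theta)k\ge\gamma(\ov\Theta)u_{xt}^2$).
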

\proof
  From the third equation in (\ref{0v}) we obtain that thanks to (\ref{v}) and Young's inequality,
  \bas
	\Theta_t
	&=& D\Theta_{xx} + \gamma(\Theta) (v_x-au_x)^2 \\
	&\le& D\Theta_{xx} + \gamma(\Theta) \cdot (2v_x^2 + 2a^2 u_x^2)
	\qquad \mbox{in } \Om\times (0,\tm),
  \eas
  whence writing
  \bas
	h(t):=2\|v_x(\cdot,t)\|_{L^\infty(\Om)}^2
	+ 2a^2\|u_x(\cdot,t)\|_{L^\infty(\Om)}^2,
	\qquad t\in (0,\tm),
  \eas
  we see that
  \be{7.2}
	\Theta_t \le D\Theta_{xx} + h(t)\gamma(\Theta)
	\qquad \mbox{in } \Om\times (0,\tm),
  \ee
  where we note that as a consequence of Lemma \ref{lem5} and Lemma \ref{lem6}, for each $T>0$ one can find $c_1(T)>0$ satisfying
  \be{7.3}
	\int_0^t h(s) ds \le c_1(T)
	\qquad \mbox{for all } t\in (0,T)\cap (0,\tm).
  \ee
  We now use that $\gamma>0$ on $[0,\infty)$
  to see that letting $\xi_\star:=\|\Theta_0\|_{L^\infty(\Om)}$ and $\phi(\xi):=\int_{\xi_\star}^\xi \frac{d\sig}{\gamma(\sig)}$,
  $\xi\ge\xi_\star$, we obtain a strictly increasing function $\phi$ which
  maps $[\xi_\star,\infty)$ onto $[0,\infty)$, because (\ref{g2}) particularly ensures that $\gamma'' \le 0$ on $[0,\infty)$,
  meaning that $\gamma(\xi) \le \gamma(0) + \gamma'(0) \xi$ for all $\xi\ge 0$ and hence
  \bas
	\int_0^\infty \frac{d\xi}{\gamma(\xi)} = \infty.
  \eas
  Due to (\ref{7.3}) and the fact that $t\in (0,\tm)$ was arbitrary there, letting
  \bas
	z(t):=\phi^{-1}\bigg(\int_0^t h(s) ds\bigg),
	\qquad t\in [0,\tm),
  \eas
  we hence obtain a well-defined element $z$ of $C^1([0,\tm))$ which satisfies $\phi'(z(t))z'(t)=h(t)$ for all $t\in [0,\tm)$,
  and which consequently forms a solution of the initial-value problem
  \bas
	\lbal
	z'(t)=h(t)\gamma(z(t)),
	\qquad t\in (0,\tm), \\[1mm]
	z(0)=\xi_\star.
	\ear
  \eas
  Therefore, $\ov{\Theta}(x,t):=z(t)$, $(x,t)\in\bom\times [0,\tm)$, defines a function
  $\ov{\Theta}\in C^{2,1}(\bom\times [0,\tm))$ which satisfies $\ov{\Theta}_t = D\ov{\Theta}_{xx} + h(t)\gamma(\ov{\Theta})$
  in $\Om\times (0,\tm)$ with $\ov{\Theta}(x,0)=\xi_\star\ge\Theta(x,0)$ for all $x\in\Om$.
  By (\ref{7.2}) and a comparison principle, 
  we thus infer that $\Theta\le\ov{\Theta}$ in $\Om\times (0,\tm)$, and may conclude as intended.
\qed
Having this information at hand, we can now go back to (\ref{4.1}) to obtain an $L^2$ bound for $v_x$
which is independent of time within bounded intervals:
\begin{cor}\label{cor8}
  If (\ref{g1}) and (\ref{g2}) hold, then given any $T>0$, one can find $C(T)>0$ such that
  \be{8.1}
	\io v_x^2(\cdot,t) \le C(T)
	\qquad \mbox{for all } t\in (0,T)\cap (0,\tm).
  \ee
\end{cor}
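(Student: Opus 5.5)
The plan is to combine the weighted bound (\ref{4.1}) from Lemma \ref{lem4} with the pointwise upper bound for $\Theta$ from Lemma \ref{lem7}, so as to remove the weight $\frac{1}{\gamma(\Theta)+D}$.

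Given $T>0$, we first invoke Lemma \ref{lem7} to fix $c_1(T)>0$ such that $0\le\Theta\le c_1(T)$ in $\Om\times((0,T)\cap(0,\tm))$. Here nonnegativity of $\Theta$ is taken from Lemma \ref{lem_loc}. Since (\ref{g1}) asserts that $\gamma$ is nondecreasing on $[0,\infty)$, it follows that
\bas
	\gamma(\Theta(x,t))+D \le \gamma(c_1(T))+D =: c_2(T)
	\qquad \mbox{for all $x\in\Om$ and } t\in (0,T)\cap(0,\tm).
\eas

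Consequently, for all $t\in(0,T)\cap(0,\tm)$ we have
\bas
	\io v_x^2(\cdot,t) \le c_2(T)\io \frac{1}{\gamma(\Theta(\cdot,t))+D}\, v_x^2(\cdot,t) \le c_2(T)\, C(T),
\eas
where $C(T)$ is the constant from (\ref{4.1}) in Lemma \ref{lem4}. This is (\ref{8.1}). Monotonicity of $\gamma$ is not even strictly needed here, since continuity of $\gamma$ on the compact interval $[0,c_1(T)]$ gives the same bound via $\max_{[0,c_1(T)]}\gamma$.

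There is no real obstacle in this step. All of the difficulty was dealt with earlier: the Gr\"onwall argument of Lemma \ref{lem4}, which relies on (\ref{g2}), and the comparison argument of Lemma \ref{lem7}, which relies on the non-integrability $\int_0^\infty\frac{d\xi}{\gamma(\xi)}=\infty$ that follows from concavity. The only point to check is that both earlier bounds hold on the same time set $(0,T)\cap(0,\tm)$, which they do.
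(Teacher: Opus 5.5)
Your proof is correct and is essentially the paper's own argument: you bound $\Theta$ above via Lemma~\ref{lem7}, use that $\gamma$ is nondecreasing to get $\frac{1}{\gamma(\Theta)+D}\ge\frac{1}{\gamma(c_1(T))+D}$, and then invoke (\ref{4.1}). Your side remark also holds: monotonicity of $\gamma$ could be replaced by continuity of $\gamma$ on $[0,c_1(T)]$ together with $\Theta\ge 0$.
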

\proof
  According to Lemma \ref{lem7}, for arbitrary $T>0$ we can fix $c_1(T)>0$ such that $\Theta\le c_1(T)$ in
  $\Om\times ((0,T)\cap (0,\tm))$. By nonnegativity of $\gamma'$, this means that
  \bas
	\frac{1}{\gamma(\Theta)+D} \ge \frac{1}{\gamma(c_1(T))+D}
	\qquad \mbox{in } \Om\times (0,T)\cap (0,\tm)),
  \eas
  so that (\ref{8.1}) is implied by (\ref{4.1}).
\qed
In view of Lemma \ref{lem_loc}, our main result on global solvability has thereby been achieved:\abs
\proofc of Theorem \ref{theo9}. \quad
  If the quantity $\tm$ from Lemma \ref{lem_loc} was finite, then Lemma \ref{lem4} and Corollary \ref{cor8} would provide
  $c_1>0$ and $c_2>0$ such that
  \be{9.99}
	\io u_x^2 \le c_1
	\quad \mbox{and} \quad
	\io v_x^2 \le c_2
	\qquad \mbox{for all } t\in (0,\tm),
  \ee
  while Lemma \ref{lem7} would yield $c_3>0$ fulfilling
  \be{9.999}
	\|\Theta\|_{L^\infty(\Om)} \le c_3
	\qquad \mbox{for all } t\in (0,\tm).
  \ee
  Once more using (\ref{v}) and Young's inequality, from (\ref{9.99}) we could infer that
  \bas
	\io u_{xt}^2
	\le 2\io v_x^2 + 2a^2 \io u_x^2
	\le 2c_2 + 2a^2 c_1
	\qquad \mbox{for all } t\in (0,\tm),
  \eas
  which together with (\ref{9.999}) would contradict (\ref{ext}).
\qed
Let us conclude this section by briefly verifying that our assumptions on $\gamma$ made in Theorem \ref{theo9}
indeed are satisfied by the simple-structured functions addressed in the introduction:\abs
\proofc of Remark \ref{rem99}. \quad
i) \ For 
$\gamma(\xi):=A-B e^{-\al \xi}$, $\xi\ge 0$,
with $\al>0$, $A>0$ and $B\in(0,A)$ such that $D\ge \frac{2A}{1+\sqrt{8}}$, we directly infer from $A-B<\gamma(\xi)<A$ for $\xi\ge0$ that \eqref{g1} holds. Furthermore, we may define 
\bas
	f(\xi):= D \cdot(\gamma(\xi)+D)\cdot \gamma''(\xi) +2\gamma(\xi) \gamma'(\xi)^2, \qquad\mbox{ for } \xi\ge0\nn
\eas
and conclude that $f$ attains its maximum over $[0,\infty)$ 
at the point $\xi_0=\max\left\{-\frac{1}{\al}\ln\left(\frac{2A+D}{4B}\right),0\right\}$ at which $f(\xi_0)$ 
is nonpositive as long as $\frac{7}{4}D^2+AD-A^2\ge 0$ holds. As this is ensured by our assumption $D\ge \frac{2A}{1+\sqrt{8}}$,
this function $\gamma$ actually also satisfies \eqref{g2}.\abs
ii) \ For $\gamma(\xi):= A+B\ln(\xi+1)$, $\xi\ge 0$,
with $A>0$ and $B>0$ fulfilling $D\ge 2B$, we directly see that \eqref{g1} holds, and computing
\bas
 	& & \hs{-20mm}
	D \cdot(\gamma(\xi)+D)\cdot \gamma''(\xi) +2\gamma(\xi) \gamma'(\xi)^2\nn\\
 	&=& D \cdot(A+D+B\ln(\xi+1))\cdot \frac{-B}{(\xi+1)^2} +2 (A+B\ln(\xi+1))\cdot \frac{B^2}{(\xi+1)^2}\nn\\
 	&=& \frac{B}{(\xi+1)^2}(2AB-AD-D^2)+\frac{B^2\ln(\xi+1)}{(\xi+1)^2}(2B-D) \qquad \mbox{ for }\xi\ge 0,
\eas
we conclude that our assumption on $B$ warrants that $\gamma$ satisfies \eqref{g2}. 
\qed
\mysection{Large time behavior. Proof of Theorem \ref{theo33}}\label{sect3}
Let us begin our description of the large time behavior in (\ref{0}) by observing that under the assumptions of Theorem \ref{theo33},
a combination of Lemma \ref{lem2} and Lemma \ref{lem3} slightly more subtle than in Lemma \ref{lem4} may even reveal some
temporal decay of the quantities addressed there:
\begin{lem}\label{lem22}
  Suppose that (\ref{g1}) and (\ref{g2}) hold, and that (\ref{aL}) is satisfied.
  Then there exist $C>0$ and $\beta>0$ such that
  \be{22.1}
	\io u_x^2(\cdot,t) \le C e^{-\beta t}
	\qquad \mbox{for all } t>0
  \ee
  and
  \be{22.2}
	\io \frac{1}{\gamma(\Theta(\cdot,t))+D} v_x^2(\cdot,t) \le C e^{-\beta t}
	\qquad \mbox{for all } t>0,
  \ee
  and furthermore we have
  \be{22.3}
	\int_0^\infty \io v_{xx}^2 < \infty.
  \ee
\end{lem}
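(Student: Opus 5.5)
The plan is to run the argument of Lemma \ref{lem4} again, but to tune all the free parameters so that the functional
\[
y(t):=\io \frac{1}{\gamma(\Theta)+D} v_x^2 + B\io u_x^2
\]
satisfies a genuine dissipative inequality
\[
y'(t)+\delta\, y(t)+\kappa\io v_{xx}^2\le 0 \qquad\mbox{for all } t>0,
\]
with some $\delta>0$ and $\kappa>0$. Since Theorem \ref{theo9} gives $\tm=\infty$, integrating this yields $y(t)\le y(0)e^{-\delta t}$, which is (\ref{22.1}) and (\ref{22.2}). A further time integration then gives $\kappa\int_0^\infty\io v_{xx}^2\le y(0)$, which is (\ref{22.3}).

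For the key steps, I would add Lemma \ref{lem2} (with a parameter $\eta>0$) to $B$ times Lemma \ref{lem3} (with a parameter $\eta'\in(0,1)$). This gives
\[
y'+c_1\io v_{xx}^2+2B(1-\eta')a\io u_x^2
\le (2+\eta)a\io\frac{v_x^2}{\gamma(\Theta)+D}+\frac{a^3}{\eta D}\io u_x^2+\frac{B}{2\eta' a}\io v_x^2,
\qquad c_1=\frac{2\gamma(0)}{\gamma(0)+D}.
\]
I will now use the estimate more sharply than in Lemma \ref{lem4}. Since $\gamma'\ge0$, we have $\frac{1}{\gamma(\Theta)+D}\le\frac{1}{\gamma(0)+D}$. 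Since $v_x\in W^{1,2}_0(\Om)$, (\ref{P}) gives $\io v_x^2\le\lam_1^{-1}\io v_{xx}^2$. Together these bound the $v$-terms on the right by
\[
\lam_1^{-1}\Big(\frac{(2+\eta)a}{\gamma(0)+D}+\frac{B}{2\eta'a}\Big)\io v_{xx}^2 .
\]
To absorb the $u_x^2$ term I need $B\ge \frac{a^2}{2(1-\eta')\eta D}$, and the choice $\eta'=\frac12$ is optimal. Taking $B$ equal to this threshold, the $v_{xx}$-coefficient becomes
\[
\frac{a}{\lam_1}\Big(\frac{2+\eta}{\gamma(0)+D}+\frac{1}{\eta D}\Big).
\]
This coefficient has to be compared with $c_1=\frac{2\gamma(0)}{\gamma(0)+D}$, so the requirement reads
\[
2\gamma(0)\lam_1\ge a\Big(2+\eta+\frac{s^2}{\eta}\Big), \qquad s:=\sqrt{1+\tfrac{\gamma(0)}{D}} .
\]
The right-hand side is minimized at $\eta=s$, where it equals $a(2+2s)$. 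The requirement therefore becomes $a\le\frac{\lam_1\gamma(0)}{1+s}$, which is precisely (\ref{aL}). Once this holds with strict inequality, I will pick $B$ slightly above its threshold and $\eta$ close to $s$, which leaves positive coefficients $\kappa$ in front of $\io v_{xx}^2$ and $\delta_1$ in front of $\io u_x^2$. Finally, $\io\frac{v_x^2}{\gamma+D}\le \frac{1}{D\lam_1}\io v_{xx}^2$, so $y\le C(\io v_{xx}^2+\io u_x^2)$, and splitting off part of the dissipation gives the desired $\delta y$.

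The main obstacle I expect is the borderline case where equality holds in (\ref{aL}). There the minimization over $\eta$ and the Poincaré step are both saturated, so no room is left to produce $\delta>0$. For this case I would first try to avoid applying Young's inequality separately, as was done in Lemma \ref{lem2} and Lemma \ref{lem3}. Instead, I would keep the exact cross terms $-2a^2\io\frac{u_xv_x}{\gamma+D}$ and $2B\io u_xv_x$ and treat the right-hand side as one quadratic form in $(v_x,u_x)$. I would then use that the weight $\frac{1}{\gamma(\Theta)+D}$ in the cross term is at most $\frac{1}{\gamma(0)+D}$, strictly smaller than the $\frac1D$ used for it in Lemma \ref{lem2}, to recover a strictly positive margin. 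If this does not work directly, a fallback is to first obtain a non-increasing $y$ with integrable $\io u_x^2$ and $\io v_{xx}^2$, and then upgrade this to exponential decay by a separate argument.
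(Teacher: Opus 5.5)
Your argument is the paper's proof: the same functional $y$, Lemma \ref{lem2} with $\eta=\sqrt{1+\gamma(0)/D}$ added to $B$ times Lemma \ref{lem3} with $\eta'=\frac12$, the bound $\frac{1}{\gamma(\Theta)+D}\le\frac{1}{\gamma(0)+D}$, Poincar\'e for $v_x$, absorption of $y$ into the remaining dissipation, and an ODE comparison, with (\ref{aL}) arising from the same optimization. Your concern about equality in (\ref{aL}) is justified, since the paper simply asserts that (\ref{aL}) is equivalent to the strict inequality (\ref{22.4}), and your first remedy resolves it: bound $\frac{a^3}{\eta}\io\frac{u_x^2}{\gamma(\Theta)+D}$ by $\frac{a^3}{\eta(\gamma(0)+D)}\io u_x^2$ instead of $\frac{a^3}{\eta D}\io u_x^2$; the requirement then becomes $2\gamma(0)\lam_1>a(2+\eta+\frac{1}{\eta})$, i.e.\ $a<\frac{\lam_1\gamma(0)}{2}$ at $\eta=1$, which (\ref{aL}) implies strictly because $\sqrt{1+\gamma(0)/D}>1$, so neither the quadratic-form refinement nor the fallback is needed.
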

\proof
  With $\lam_1$ as in (\ref{P}), we see that (\ref{aL}) is equivalent to the inequality
  \be{22.4}
	\frac{2\gamma(0)}{\gamma(0)+D} \cdot \frac{\lam_1}{a} > \frac{2}{\sqrt{D(\gamma(0)+D)}} + \frac{2}{\gamma(0)+D}.
  \ee
  Moreover, writing $\eta_1:=\sqrt{\frac{\gamma(0)+D}{D}}$ and $\eta_2:=\frac{1}{2}$, in a straightforward manner we obtain that
  \bas
	\inf_{\eta_1'>0, \eta_2'\in (0,1)} \Big\{ \frac{2+\eta_1'}{\gamma(0)+D} + \frac{1}{4\eta_1'\eta_2'(1-\eta_2')D}\Big\}
	&=& \frac{2+\eta_1}{\gamma(0)+D} + \frac{1}{4\eta_1 \eta_2 (1-\eta_2) D} \\
	&=& \frac{2+\sqrt{\frac{\gamma(0)+D}{D}}}{\gamma(0)+D}
		+ \frac{1}{\sqrt{\frac{\gamma(0)+D}{D}} \cdot D} \\
	&=& \frac{2}{\sqrt{D(\gamma(0)+D)}} + \frac{2}{\gamma(0)+D},
  \eas
  so that from (\ref{22.4}) it follows that
  \bas
	\frac{2\gamma(0)}{\gamma(0)+D} \cdot \frac{\lam_1}{a} - \frac{2+\eta_1}{\gamma(0)+D}
	> \frac{1}{4\eta_1 \eta_2(1-\eta_2)D}.
  \eas
  We can thus pick $B>0$ satisfying
  \bas
	\frac{2\gamma(0)}{\gamma(0)+D} \cdot \frac{\lam_1}{a} - \frac{2+\eta_1}{\gamma(0)+D} 
	> \frac{B}{2\eta_2 a^2} > \frac{1}{4\eta_1 \eta_2 (1-\eta_2)D},
  \eas
  which implies that the number
  \be{22.5}
	c_1:=2(1-\eta_2) aB - \frac{a^3}{\eta_1 D}
  \ee
  is positive, and that we can furthermore pick $\del\in (0,1)$ such that also
  \be{22.6}
	c_2:=(1-\del) \cdot \frac{2\gamma(0)\lam_1}{\gamma(0)+D} - \frac{(2+\eta_1)a}{\gamma(0)+D} - \frac{B}{2\eta_2 a}
  \ee
  is positive.
  Upon these choices, a combination of Lemma \ref{lem2} with Lemma \ref{lem3} shows that
  \bas
	y(t):=\io \frac{1}{\gamma(\Theta(\cdot,t))+D} v_x^2(\cdot,t) 
	+ B \io u_x^2(\cdot,t),
	\qquad t\ge 0,
  \eas
  has the property that
  \bas
	& & \hs{-20mm}
	y'(t) + \frac{2\gamma(0)}{\gamma(0)+D} \io v_{xx}^2
	+ 2(1-\eta_2) aB \io u_x^2 \\
	&\le& (2+\eta_1) a \io \frac{1}{\gamma(\Theta)+D} v_x^2
	+ \frac{a^3}{\eta_1 D} \io u_x^2
	+ \frac{B}{2\eta_2 a} \io v_x^2
	\qquad \mbox{for all } t>0,
  \eas
  where by the Poincar\'e inequality in (\ref{P}),
  \bas
	\frac{2\gamma(0)}{\gamma(0)+D} \io v_{xx}^2
	\ge \del\cdot \frac{2\gamma(0)}{\gamma(0)+D} \io v_{xx}^2
	+ (1-\del)\cdot \frac{2\gamma(0)}{\gamma(0)+D} \cdot \lam_1 \io v_x^2
	\qquad \mbox{for all } t>0,
  \eas
  because $\del<1$.
  Since 
  \be{22.7}
	\io \frac{1}{\gamma(\Theta)+D} v_x^2
	\le \frac{1}{\gamma(0)+D} \io v_x^2
	\qquad \mbox{for all } t>0
  \ee
  by monotonicity of $\gamma$, in line with (\ref{22.5}) and (\ref{22.6}) we therefore see that
  \bas
	y'(t) + \del\cdot\frac{2\gamma(0)}{\gamma(0)+D} \io v_{xx}^2
	+ c_1 \io u_x^2 + c_2 \io v_x^2 \le 0
	\qquad \mbox{for all } t>0,
  \eas
  whence again using (\ref{22.7}) we infer that
  \be{22.8}
	y'(t) + c_3 y(t) + \del\cdot\frac{2\gamma(0)}{\gamma(0)+D} \io v_{xx}^2 \le 0
	\qquad \mbox{for all } t>0
  \ee
  with $c_3:=\min\{ \frac{c_1}{B} \, , \, (\gamma(0)+D) c_2\}$.
  An ODE comparison argument shows that therefore
  \bas
	y(t) \le y(0) e^{-c_3 t}
	\qquad \mbox{for all } t>0,
  \eas
  while a simple integration in (\ref{22.8}) furthermore yields the inequality
  \bas
	\del\cdot\frac{2\gamma(0)}{\gamma(0)+D} \int_0^t \io v_{xx}^2 \le y(0)
  \eas
  for all $t>0$.
\qed
By interpolation, (\ref{22.1}) and (\ref{22.3}) imply exponential decay of $u_x$ also with respect to the norm
in $L^\infty(\Om)$.
\begin{lem}\label{lem23}
  Assume (\ref{g1}) and (\ref{g2}), and suppose that (\ref{aL}) holds.
  Then there exist $C>0$ and $\beta>0$ such that
  \be{23.1}
	\|u_x(\cdot,t)\|_{L^\infty(\Om)} \le C e^{-\beta t}
	\qquad \mbox{for all } t>0.
  \ee
\end{lem}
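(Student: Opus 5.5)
Our plan is to combine the exponential $L^2$ decay of $u_x$ from (\ref{22.1}) with a bound on $u_{xx}$ obtained from the ODE structure of the second equation in (\ref{0v}). We then interpolate via the Gagliardo--Nirenberg inequality in one dimension. Since $u_x=0$ on $\pO$, the inequality $\|\vp\|_{L^\infty(\Om)}\le c_1\|\vp_x\|_{L^2(\Om)}^{1/2}\|\vp\|_{L^2(\Om)}^{1/2}$ holds for all $\vp\in W_0^{1,2}(\Om)$. This follows, for instance, from $\vp^2(x)=2\int_{x_0}^x \vp\vp_x$ with $x_0\in\pO$ and the Cauchy--Schwarz inequality. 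Applying it to $\vp=u_x(\cdot,t)$ reduces the claim to the following: $\io u_{xx}^2(\cdot,t)$ grows at most subexponentially, or better, stays bounded.

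To control $u_{xx}$, we differentiate the identity $u_x(x,t)=e^{-at}u_{0x}(x)+\int_0^t e^{-a(t-s)}v_x(x,s)\,ds$ from the proof of Lemma \ref{lem6} once more in $x$. This gives
\bas
	u_{xx}(\cdot,t)=e^{-at}u_{0xx}+\int_0^t e^{-a(t-s)}v_{xx}(\cdot,s)\,ds .
\eas
Taking $L^2(\Om)$ norms and using Minkowski's and the Cauchy--Schwarz inequalities, we obtain
\bas
	\|u_{xx}(\cdot,t)\|_{L^2(\Om)}
	\le \|u_{0xx}\|_{L^2(\Om)}
	+ \Big\{\int_0^t e^{-2a(t-s)}ds\Big\}^{1/2}\Big\{\int_0^\infty\io v_{xx}^2\Big\}^{1/2}
	\le \|u_{0xx}\|_{L^2(\Om)} + \frac{1}{\sqrt{2a}}\Big\{\int_0^\infty\io v_{xx}^2\Big\}^{1/2}.
\eas
By (\ref{22.3}) the right-hand side is a finite constant $c_2$, and $u_{0xx}\in L^2(\Om)$ by (\ref{init}). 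The differentiation under the integral needs a little justification, because $v_{xx}$ is only known to be continuous for $t>0$. We would handle this either by working with the time-integrated weak form, or by noting that $u_{xx}$ satisfies $u_{xxt}=v_{xx}-au_{xx}$ in the distributional sense, with $v_{xx}\in L^2_{loc}(\bom\times[0,\infty))$ by (\ref{22.3}).

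Combining the two steps yields
\bas
	\|u_x(\cdot,t)\|_{L^\infty(\Om)}\le c_1 c_2^{1/2}\Big(\io u_x^2(\cdot,t)\Big)^{1/4}\le c_1c_2^{1/2}C^{1/4}e^{-\frac{\beta}{4}t}
	\qquad \mbox{for all } t>0,
\eas
which is (\ref{23.1}) with $\beta$ replaced by $\beta/4$.

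The only mildly delicate point is the regularity justification for $u_{xx}$ near $t=0$. Everything else is a routine combination of Lemma \ref{lem22} with the explicit Duhamel representation already used in Lemma \ref{lem6}.
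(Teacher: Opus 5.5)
Your argument is correct and is essentially the paper's proof. The paper also controls the Duhamel representation of $u_{xx}$ through (\ref{22.3}), and then uses the one-dimensional Gagliardo--Nirenberg inequality on $W_0^{1,2}(\Om)$ to transfer the decay in (\ref{22.1}) to $\|u_x\|_{L^\infty(\Om)}$. The one difference is that you keep the factor $e^{-a(t-s)}$ and get a time-independent bound for $\|u_{xx}\|_{L^2(\Om)}$, whereas the paper discards it, obtains only $c_1(1+t^{\frac{1}{2}})$, and loses an arbitrarily small amount in the exponent. Your version is therefore slightly sharper, and your remark on justifying the representation near $t=0$ covers a point the paper passes over.
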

\proof
  A rough estimation using the second equation in (\ref{0v}) and the Cauchy-Schwarz inequality shows that
  \bas
	\|u_{xx}\|_{L^2(\Om)}
	&=& \bigg\| e^{-at} u_{0xx} + \int_0^t e^{-a(t-s)} v_{xx}(\cdot,s) ds \bigg\|_{L^2(\Om)} \\
	&\le& \|u_{0xx}\|_{L^2(\Om)} + \int_0^t \|v_{xx}(\cdot,s)\|_{L^2(\Om)} ds \\
	&\le& \|u_{0xx}\|_{L^2(\Om)} + \bigg\{ \int_0^t \io v_{xx}^2 \bigg\}^\frac{1}{2} \cdot t^\frac{1}{2}
	\qquad \mbox{for all } t>0,
  \eas
  whence relying on (\ref{22.3}) we can fix $c_1>0$ such that
  \bas
	\|u_{xx}\|_{L^2(\Om)} \le c_1 \cdot (1+t^\frac{1}{2})
	\qquad \mbox{for all } t>0.
  \eas
  Since, on the other hand, (\ref{22.1}) provides $c_2>0$ and $\beta_1>0$ fulfilling
  \bas
	\|u_x\|_{L^2(\Om)} \le c_2 e^{-\beta_1 t}
	\qquad \mbox{for all } t>0,
  \eas
  and since a Gagliardo-Nirenberg inequality provides $c_3>0$ such that
  \bas
	\|\vp\|_{L^\infty(\Om)} \le c_3 \|\vp_x\|_{L^2(\Om)}^\frac{1}{2} \|\vp\|_{L^2(\Om)}^\frac{1}{2}
	\qquad \mbox{for all } \vp\in W_0^{1,2}(\Om),
  \eas
  this implies that
  \bas
	\|u_x\|_{L^\infty(\Om)} \le c_3 \cdot (c_2 e^{-\beta_1 t})^\frac{1}{2} \cdot 
		\big\{ c_1\cdot (1+t^\frac{1}{2})\big\}^\frac{1}{2}
	\qquad \mbox{for all } t>0,
  \eas
  and thereby entails (\ref{23.1}) with any fixed $\beta\in (0,\frac{\beta_1}{2})$, and with
  $C:=\sqrt{c_1 c_2} c_3\cdot\sup_{t>0} \big\{ \sqrt{1+t^\frac{1}{2}} e^{-(\frac{\beta_1}{2}-\beta)t}\big\}$ clearly being finite.
\qed
In Lemma \ref{lem25}, the previous statement will be combined with the following.
\begin{lem}\label{lem24}
  If (\ref{g1}), (\ref{g2}) and (\ref{aL}) hold, then
  \be{24.1}
	\int_0^\infty \|v_x(\cdot,t)\|_{L^\infty(\Om)}^2 dt < \infty.
  \ee
\end{lem}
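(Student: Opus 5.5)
This follows the argument of Lemma \ref{lem5}, with the time-local dissipation bound (\ref{4.3}) replaced by the global one (\ref{22.3}) from Lemma \ref{lem22}. By (\ref{0v}) we have $v_x=0$ on $\pO$ for all $t>0$, so $v_x(\cdot,t)\in W_0^{1,2}(\Om)$ for each $t>0$. Hence the embedding inequality
\bas
	\|\vp\|_{L^\infty(\Om)} \le c_1\|\vp_x\|_{L^2(\Om)}
	\qquad \mbox{for all } \vp\in W_0^{1,2}(\Om),
\eas
used in the proof of Lemma \ref{lem5}, applies to $\vp:=v_x(\cdot,t)$. This gives
\bas
	\|v_x(\cdot,t)\|_{L^\infty(\Om)}^2 \le c_1^2 \io v_{xx}^2(\cdot,t)
	\qquad \mbox{for all } t>0.
\eas

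The constant $c_1$ depends only on $\Om$. One can take $c_1=|\Om|^{1/2}$, since any $\vp$ vanishing at an endpoint of $\Om$ satisfies $|\vp(x)|\le\int_\Om|\vp_x|\le |\Om|^{1/2}\|\vp_x\|_{L^2(\Om)}$ by the Cauchy–Schwarz inequality. Integrating the pointwise-in-time inequality over $(0,\infty)$ and invoking (\ref{22.3}) then yields
\bas
	\int_0^\infty \|v_x(\cdot,t)\|_{L^\infty(\Om)}^2\,dt \le c_1^2 \int_0^\infty \io v_{xx}^2 < \infty,
\eas
which is (\ref{24.1}).

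The argument needs two justifications. First, we must know that $\tm=\infty$, which is the content of Theorem \ref{theo9}, so that Lemma \ref{lem22} indeed holds for all $t>0$. Second, $v_{xx}(\cdot,t)\in L^2(\Om)$ for a.e.\ $t>0$, which follows from the regularity $u,u_t\in C^{2,1}(\bom\times(0,\infty))$ in Theorem \ref{theo9}. Neither of these is a substantial obstacle. All the real work, namely the global-in-time dissipation estimate under (\ref{aL}), has already been done in Lemma \ref{lem22}.
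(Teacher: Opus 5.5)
Your proposal is correct and is essentially the paper's own argument: the paper combines the global dissipation bound (\ref{22.3}) with the embedding $W_0^{1,2}(\Om)\hra L^\infty(\Om)$ applied to $v_x(\cdot,t)$, exactly as in Lemma \ref{lem5}. Your explicit constant $c_1=|\Om|^{1/2}$ and your remarks that $\tm=\infty$ and that $v_{xx}$ is regular enough are valid details that the paper leaves implicit.
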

\proof
  This can be seen by using (\ref{22.3}) together with the continuity of the embedding $W_0^{1,2}(\Om) \hra L^\infty(\Om)$,
  quite in the style of the argument in Lemma \ref{lem5}.
\qed
In fact, we can now control the growth in the temperture distribution as follows.
\begin{lem}\label{lem25}
  Assuming (\ref{g1}), (\ref{g2}) and (\ref{aL}), one can find $C>0$ such that
  \be{25.1}
	\Theta(x,t) \le C
	\qquad \mbox{for all $x\in\Om$ and } t>0.
  \ee
\end{lem}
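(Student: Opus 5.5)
Since the pointwise bound in Lemma \ref{lem7} was obtained by comparison with a spatially homogeneous supersolution, I will repeat that argument. This time I aim to show that the heat source is integrable over all of $(0,\infty)$, not just over bounded time intervals.

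As in the proof of Lemma \ref{lem7}, I write $u_{xt}=v_x-au_x$ and use Young's inequality to obtain
\bas
	\Theta_t \le D\Theta_{xx} + h(t)\gamma(\Theta)
	\qquad \mbox{in } \Om\times (0,\infty),
\eas
where $h(t):=2\|v_x(\cdot,t)\|_{L^\infty(\Om)}^2+2a^2\|u_x(\cdot,t)\|_{L^\infty(\Om)}^2$. Here $\tm=\infty$ by Theorem \ref{theo9}. The decisive step is to verify that
\bas
	c_1:=\int_0^\infty h(t)\,dt<\infty .
\eas
For the first summand this is exactly Lemma \ref{lem24}. For the second, Lemma \ref{lem23} gives $\|u_x(\cdot,t)\|_{L^\infty(\Om)}^2\le C^2e^{-2\beta t}$, which is integrable on $(0,\infty)$.

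Next, with $\xi_\star:=\|\Theta_0\|_{L^\infty(\Om)}$ and $\phi(\xi):=\int_{\xi_\star}^\xi\frac{d\sig}{\gamma(\sig)}$, I use the fact already noted in Lemma \ref{lem7}: (\ref{g2}) forces $\gamma''\le 0$, hence $\int^\infty\frac{d\xi}{\gamma(\xi)}=\infty$. So $\phi$ is an increasing bijection from $[\xi_\star,\infty)$ onto $[0,\infty)$. The function $z(t):=\phi^{-1}(\int_0^t h)$ solves $z'=h\gamma(z)$ with $z(0)=\xi_\star$. Therefore $\ov\Theta(x,t):=z(t)$ is a supersolution of the Neumann problem dominating $\Theta_0$ initially. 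The comparison principle, applied on each finite interval $(0,T)$, then gives $\Theta\le z$ on $\Om\times(0,\infty)$. Since $z$ is nondecreasing with $z(t)\le \phi^{-1}(c_1)$ for all $t>0$, we obtain (\ref{25.1}) with $C:=\phi^{-1}(c_1)$.

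I see no real obstacle here. The only point requiring care is that the comparison function now stays bounded uniformly in time, and this rests entirely on the global integrability of $h$. That integrability was prepared by Lemma \ref{lem23} and Lemma \ref{lem24}, which in turn depend on the smallness condition (\ref{aL}) through Lemma \ref{lem22}.
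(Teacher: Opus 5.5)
Your proposal is correct and matches the paper's proof: it first establishes $\int_0^\infty \big(\|v_x\|_{L^\infty(\Om)}^2+\|u_x\|_{L^\infty(\Om)}^2\big)\,dt<\infty$ from Lemma \ref{lem24} and Lemma \ref{lem23}, and then repeats the comparison argument of Lemma \ref{lem7}. The fact that $\int_0^\infty h<\infty$ makes the spatially homogeneous supersolution uniformly bounded by $\phi^{-1}(c_1)$, exactly as you argue.
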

\proof
  Since a combination of Lemma \ref{lem24} with Lemma \ref{lem23} clearly shows that
  \bas
	\int_0^\infty \big( \|v_x(\cdot,t)\|_{L^\infty(\Om)}^2 + \|u_x(\cdot,t)\|_{L^\infty(\Om)}^2 \big) dt <\infty,
  \eas
  we can derive (\ref{25.1}) by an almost verbatim copy of the comparison argument detailed in Lemma \ref{lem7}.
\qed
With this information at hand, we can now favorably control the temperature-dependent weight function appearing in (\ref{22.2}),
so as to obtain a genuine exponential decay property also of $v_x$ in the following sense.
\begin{cor}\label{cor255}
  If (\ref{g1}), (\ref{g2}) and (\ref{aL}) are valid, then with some $C>0$ and $\beta>0$ we have
  \bas
	\io v_x^2(\cdot,t) \le C e^{-\beta t}
	\qquad \mbox{for all } t>0.
  \eas
\end{cor}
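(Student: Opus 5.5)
The plan is to combine the weighted decay estimate (\ref{22.2}) from Lemma \ref{lem22} with the uniform temperature bound from Lemma \ref{lem25}, exactly in the spirit of Corollary \ref{cor8}. There, a bound on $\Theta$ on finite intervals converted (\ref{4.1}) into (\ref{8.1}). Now the bound on $\Theta$ holds for all times, so the same conversion turns exponential decay of the weighted integral into exponential decay of $\io v_x^2$ itself.

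Concretely, Lemma \ref{lem25} gives $c_1>0$ with $0\le\Theta\le c_1$ in $\Om\times(0,\infty)$. Since $\gamma'\ge 0$ by (\ref{g1}), we have $\gamma(\Theta)\le\gamma(c_1)$ and therefore
\bas
	\frac{1}{\gamma(\Theta)+D} \ge \frac{1}{\gamma(c_1)+D}
	\qquad \mbox{in } \Om\times(0,\infty).
\eas
Multiplying by $v_x^2$ and integrating, (\ref{22.2}) yields
\bas
	\io v_x^2(\cdot,t) \le (\gamma(c_1)+D) \io \frac{1}{\gamma(\Theta(\cdot,t))+D} v_x^2(\cdot,t) \le (\gamma(c_1)+D)\,C e^{-\beta t}
\eas
for all $t>0$. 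This gives the claim with the same $\beta$ as in Lemma \ref{lem22} and the constant $(\gamma(c_1)+D)C$.

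There is essentially no obstacle here. The only substantive input is the time-independent upper bound for $\Theta$ from Lemma \ref{lem25}, which has already been established. Without that bound, the weight $1/(\gamma(\Theta)+D)$ could degenerate as $t\to\infty$ if $\gamma$ is unbounded, as in Remark \ref{rem99} ii), and (\ref{22.2}) alone would not control $v_x$.
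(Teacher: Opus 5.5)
Your proof is correct and is essentially the paper's argument. The uniform bound on $\Theta$ from Lemma \ref{lem25}, combined with $\gamma'\ge 0$, gives a positive lower bound $\frac{1}{\gamma(\Theta)+D}\ge\frac{1}{\gamma(c_1)+D}$ on the weight, so (\ref{22.2}) immediately yields the exponential decay of $\io v_x^2$ with the same rate $\beta$.
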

\proof
  We only need to note that the boundedness property of $\Theta$ asserted by Lemma \ref{lem25} ensures that
  $\inf_{(x,t)\in \Om\times (0,\infty)} \frac{1}{\gamma(\Theta(x,t))+D}$ is positive according to (\ref{g1}).
  The claim therefore is is an immediate consequence of (\ref{22.2}).
\qed
Due to parabolic smoothing, the boundedness features particularly entailed by
Lemma \ref{lem25}, Lemma \ref{lem24} and Lemma \ref{lem22} imply some time-independent H\"older regularity property
of $\Theta$.
\begin{lem}\label{lem26}
  If (\ref{g1}), (\ref{g2}) and (\ref{aL}) hold, then there exist $\alpha\in (0,1)$ and $C>0$ such that
  \be{26.1}
	\|\Theta\|_{C^{\al,\frac{\al}{2}}(\bom\times [t_0,t_0+1])} \le C
	\qquad \mbox{for all } t_0>1.
  \ee
\end{lem}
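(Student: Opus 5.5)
We view the third equation in (\ref{0v}) as a linear heat equation $\Theta_t = D\Theta_{xx} + f(x,t)$ with source $f:=\gamma(\Theta)u_{xt}^2$, posed under homogeneous Neumann conditions. The aim is to show that $f$ is bounded in some $L^p(\Om\times(t_0,t_0+1))$ with $p>\frac{3}{2}$ (the threshold $p>\frac{n+2}{2}$ for $n=1$), uniformly in $t_0$. Together with the uniform $L^\infty$ bound for $\Theta$ from Lemma \ref{lem25}, standard H\"older regularity results for parabolic equations then yield (\ref{26.1}). These can be taken from Ladyzhenskaya--Solonnikov--Ural'tseva, or from the Porzio--Vespri type estimates for bounded weak solutions with sources in $L^p$, $p>\frac{n+2}{2}$. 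A cutoff in time, supported near $[t_0-1,t_0+1]$, places the analysis on unit-length cylinders. Since $t_0>1$, the estimate is interior in time, so no regularity of $\Theta_0$ is used.

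To bound $f$, note first that Lemma \ref{lem25} gives $\Theta\le c_1$, hence $\gamma(\Theta)\le\gamma(c_1)$ by (\ref{g1}). Next, $u_{xt}=v_x-au_x$ yields $f\le 2\gamma(c_1)\,(v_x^2+a^2u_x^2)$. Lemma \ref{lem23} provides $\|u_x\|_{L^\infty(\Om)}\le C$ for all $t$. For $v_x$ we use $v_x=0$ on $\pO$, which gives $\|v_x(\cdot,t)\|_{L^\infty(\Om)}^2\le c\io v_{xx}^2(\cdot,t)$ as in Lemma \ref{lem5}. Combining this with Lemma \ref{lem24}, or directly with (\ref{22.3}), we obtain
\bas
	\int_{t_0-1}^{t_0+1}\|f(\cdot,t)\|_{L^\infty(\Om)}\,dt\le C
	\qquad \mbox{for all } t_0>1 .
\eas
This is a bound in $L^1_tL^\infty_x$, which does not meet the $L^p$ criterion with $p>\frac{3}{2}$ in space-time.

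This mismatch is the main obstacle, and there are two ways around it. The first is to note that the De Giorgi/Ladyzhenskaya theory admits sources in mixed spaces $L^q_tL^r_x$ with $\frac{1}{q}+\frac{n}{2r}<1$. For $n=1$, $r=\infty$ this only requires $q>1$, so the borderline $q=1$ is not covered. The second way, which I would pursue, is to improve the integrability of $v_x$ by interpolation. Corollary \ref{cor255} gives $\|v_x\|_{L^2(\Om)}\le C$ uniformly in time, and the Gagliardo--Nirenberg inequality $\|v_x\|_{L^\infty}^2\le c\|v_{xx}\|_{L^2}\|v_x\|_{L^2}$ then gives $\|v_x\|_{L^\infty}^2\le C\|v_{xx}\|_{L^2}$. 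With (\ref{22.3}) this places $\|f(\cdot,t)\|_{L^\infty(\Om)}$ in $L^2((t_0-1,t_0+1))$ uniformly in $t_0$. The pair $q=2$, $r=\infty$ satisfies $\frac12+0<1$, so the H\"older theory applies.

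If one prefers to avoid mixed-norm results, an alternative is to use the Neumann heat semigroup representation on $[t_0-1,t]$. Split $\Theta(t)=e^{(t-t_0+1)D\Delta}\Theta(t_0-1)+\int_{t_0-1}^t e^{(t-s)D\Delta}f(s)\,ds$. Smoothing gives $\|e^{\tau D\Delta}\vp\|_{C^\al}\le c\tau^{-\al/2}\|\vp\|_{L^\infty}$, so the first term is bounded in $C^\al$ uniformly for $t\ge t_0$, using Lemma \ref{lem25}. For the integral term, H\"older's inequality in $s$ with the $L^2$-in-time bound on $\|f\|_{L^\infty}$ handles $\int (t-s)^{-\al/2}\|f(s)\|_{L^\infty}ds$ for $\al<1$. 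H\"older continuity in time then follows from the same representation by standard arguments, with exponent $\frac{\al}{2}$. Either route yields (\ref{26.1}) with a uniform constant.
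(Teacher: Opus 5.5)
Your proposal is correct and follows essentially the paper's route. You bound the heat source $\gamma(\Theta)u_{xt}^2$ in $L^2_tL^\infty_x$ on the windows $(t_0-1,t_0+1)$ uniformly in $t_0$, using Lemma \ref{lem25}, Lemma \ref{lem23}, Corollary \ref{cor255}, (\ref{22.3}) and the interpolation $\|v_x\|_{L^\infty(\Om)}^2\le c\|v_{xx}\|_{L^2(\Om)}\|v_x\|_{L^2(\Om)}$, and then apply the Porzio--Vespri H\"older estimate after a temporal cutoff, as in (\ref{26.2})--(\ref{26.5}); your interpolation step is precisely what the paper leaves implicit when it derives the square-integrability in (\ref{26.2}) from Lemma \ref{lem24} and Lemma \ref{lem22}, since Lemma \ref{lem24} alone gives only an $L^1$-in-time bound on $\|u_{xt}^2\|_{L^\infty(\Om)}$.
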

\proof
  Since $\Theta$ and hence also $\gamma(\Theta)$ are bounded in $\Om\times (0,\infty)$ by Lemma \ref{lem25}, from Lemma \ref{lem24},
  Lemma \ref{lem22} and (\ref{v}) it follows that there exists $c_1>0$ fulfilling
  \be{26.2}
	\int_{t_0-1}^{t_0+1} \big\| \gamma(\Theta(\cdot,t)) u_{xt}^2(\cdot,t)\big\|_{L^\infty(\Om)}^2 dt
	+ \int_{t_0-1}^{t_0+1} \big\| \Theta(\cdot,t)\big\|_{L^\infty(\Om)}^2 dt \le c_1
	\qquad \mbox{for all } t_0>1.
  \ee
  Employing a known result on H\"older regularity in scalar parabolic equations (\cite[Theorem 1.3 and Remark 1.4]{PV}), 
  we can thereupon pick
  $\al\in (0,1)$ and $c_2>0$ in such a way that whenever $t_0>1, h\in C^0(\bom\times [t_0-1,t_0+1])$ and
  $w\in C^{2,1}(\bom\times [t_0-1,t_0+1])$ are such that
  \be{26.3}
	\lball
	w_t = D w_{xx} + h(x,t), 
	\qquad & x\in\Om, \ t\in (t_0-1,t_0+1), \\[1mm]
	w_x=0,
	\qquad & x\in\pO, \ t\in (t_0-1,t_0+1), \\[1mm]
	w(x,t_0-1)=0,
	\qquad & x\in\Om,
	\ear
  \ee
  is solved in the classical sense, with 
  \be{26.4}
	\int_{t_0-1}^{t_0+1} \|h(\cdot,t)\|_{L^\infty(\Om)}^2 dt \le 10 c_1,
  \ee
  we have
  \be{26.5}
	\|w\|_{C^{\al,\frac{\al}{2}}(\bom\times [t_0-1,t_0+1])} \le c_2.
  \ee
  To appropriately make use of this, given $t_0>1$ we fix $\zeta\in C^\infty([t_0-1,t_0+1])$ such that
  $\zeta(t_0-1)=0$, $\zeta\equiv 1$ on $[t_0,t_0+1]$, $0\le\zeta\le 1$ and $|\zeta'|\le 2$,
  and let $w(x,t):=\zeta(t) \Theta(x,t)$ as well as
  $h(x,t):=\zeta(t) \gamma(\Theta(x,t))u_{xt}^2(x,t) + \zeta'(t) \Theta(x,t)$
  for $(x,t)\in\bom\times [t_0-1,t_0+1]$.
  Then from Lemma \ref{lem_loc} and (\ref{0}) we obtain that $h$ and $w$ have the regularity properties listed above, and that 
  (\ref{26.3}) is classically solved, whereas (\ref{26.2}) along with Young's inequality ensures that
  \bas
	\int_{t_0-1}^{t_0+1} \|h(\cdot,t)\|_{L^\infty(\Om)}^2 dt
	&\le& 2 \int_{t_0-1}^{t_0+1} \big\| \zeta(t) \gamma(\Theta(\cdot,t)) u_{xt}^2(\cdot,t)\big\|_{L^\infty(\Om)}^2 dt
	+ 2 \int_{t_0-1}^{t_0+1} \big\| \zeta'(t) \Theta(\cdot,t)\big\|_{L^\infty(\Om)}^2 dt \\
	&\le& 2c_1 + 8c_1 = 10c_1.
  \eas
  As thus also (\ref{26.4}) holds, the conclusion (\ref{26.5}) thereby implied especially entails (\ref{26.1}) with $C:=c_2$,
  because $\zeta\equiv 1$ on $[t_0,t_0+1]$.
\qed
In order to similarly derive a H\"older bound also for the first component $v$ of the solution to (\ref{0v}), 
let us first record a basic evolution property of two mass functionals related to (\ref{0}).
\begin{lem}\label{lem_mass}
  If (\ref{g1}) holds, then
  \be{mass}
	\io u_t(\cdot,t)= \io u_{0t}
	\quad \mbox{and} \quad
	\io u(\cdot,t) = \io u_0 + t\io u_{0t}
	\qquad \mbox{for all } t\in (0,\tm).
  \ee
\end{lem}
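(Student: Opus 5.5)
The plan is to integrate the first equation of (\ref{0}) in space and exploit the homogeneous Neumann boundary condition $u_x=0$ on $\pO$. For fixed $t\in (0,\tm)$, the regularity in (\ref{l1}) and (\ref{l2}) allows differentiation under the integral sign. We therefore first compute
\bas
	\frac{d}{dt} \io u_t = \io u_{tt}
	= \io \big(\gamma(\Theta) u_{xt} + a\gamma(\Theta) u_x\big)_x
	= \Big[ \gamma(\Theta) u_{xt} + a\gamma(\Theta) u_x \Big]_{x\in\pO}.
\eas
This boundary term vanishes. Indeed, $u_x=0$ on $\pO$ for all $t\in(0,\tm)$, and differentiating this identity in time gives $u_{xt}=0$ on $\pO$ as well. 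The differentiation is legitimate because $u_t\in C^{2,1}(\bom\times(0,\tm))$, so that $u_{xt}=(u_t)_x$ is continuous up to the boundary.

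Hence $t\mapsto \io u_t(\cdot,t)$ is constant on $(0,\tm)$. The continuity of $u_t$ on $\bom\times[0,\tm)$ from (\ref{l2}) then identifies this constant with $\io u_{0t}$, which is the first identity in (\ref{mass}).

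The second identity follows by one more time integration. Since $u\in C^1$ in time on $\bom\times[0,\tm)$, we have $\frac{d}{dt}\io u = \io u_t = \io u_{0t}$. Integrating from $0$ to $t$ and using $u(\cdot,0)=u_0$ gives $\io u(\cdot,t)=\io u_0 + t\io u_{0t}$.

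The only delicate point is justifying the computation near $t=0$. There $u$ is only $C^{1+\mu,\frac{1+\mu}{2}}$, so $u_{tt}$ need not be continuous up to $t=0$. To handle this, I would carry out the above argument on $[t_1,t]$ with $t_1>0$, where $C^{2,1}$ regularity of $u_t$ is available, and then let $t_1\searrow 0$ using continuity of $u_t$ and $u$ at $t=0$. Alternatively, one can work with $v$ from (\ref{0v}): there $\frac{d}{dt}\io v = a\io v - a^2\io u$ and $\frac{d}{dt}\io u=\io v - a\io u$, which combine to give $\frac{d}{dt}\io u_t=0$ by the same reasoning.
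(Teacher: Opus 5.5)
Your proposal is correct and follows the paper's approach: the paper's proof is just the observation that, by (\ref{0}) and the Neumann condition, $\frac{d^2}{dt^2}\io u=0$ on $(0,\tm)$, followed by two time integrations. Your additional justifications (that the boundary term vanishes because $u_{xt}=0$ on $\pO$, and that one may pass to $t=0$ by letting $t_1\searrow 0$ using the continuity supplied by (\ref{l1})--(\ref{l2})) are sound and simply fill in details the paper leaves implicit.
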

\proof
  Both identities immediately result from the fact that according to (\ref{0}) we have $\frac{d^2}{dt^2} \io u=0$ 
  for all $t\in (0,\tm)$.
\qed
Upon simple linear combination of the components $u$ and $v$ with certain spatially flat functions,
the first equation in (\ref{0v}) can indeed be made accessible to standard parabolic H\"older regularity theory
in a reasonably time-independent manner:
\begin{lem}\label{lem27}
  Suppose that (\ref{g1}) and (\ref{g2}) and (\ref{aL}) are satisfied.
  Then there exist $\alpha\in (0,1)$ and $C>0$ such that
  \be{27.1}
	\|v_x\|_{C^{\al,\frac{\al}{2}}(\bom\times [t_0,t_0+1])} \le C
	\qquad \mbox{for all } t_0>2.
  \ee
\end{lem}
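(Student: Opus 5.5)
We need a time-uniform Hölder bound for $v_x$ on intervals $[t_0,t_0+1]$. The natural route is to differentiate the $v$-equation in space and apply the same parabolic Hölder result (\cite[Theorem 1.3 and Remark 1.4]{PV}) that was used in Lemma \ref{lem26}, this time in divergence form. Setting $w:=v_x$, the first equation in (\ref{0v}) gives
\bas
	w_t = \big( (\gamma(\Theta) v_x)_x \big)_x + a w - a^2 u_x
	= \big( \gamma(\Theta) w_x + \gamma'(\Theta)\Theta_x w\big)_x + a w - a^2 u_x ,
\eas
with $w=0$ on $\pO$. The boundary data are therefore of Dirichlet type, which is convenient for divergence-form De Giorgi--Nash type estimates. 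The diffusion coefficient $\gamma(\Theta)$ lies between $\gamma(0)$ and $\gamma(\|\Theta\|_{L^\infty})$, uniformly in time by Lemma \ref{lem25}, so the operator is uniformly parabolic.

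An attractive alternative is to avoid $\Theta_x$ entirely by working with the flux $q:=\gamma(\Theta)v_x$. Then $v_t=q_x+av-a^2u$, and one could look for an equation for $q$. However, $q_t$ contains $\gamma'(\Theta)\Theta_t v_x$, and $\Theta_t$ involves $u_{xt}^2$, which is poorly controlled.

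I will therefore pursue a cleaner route closer to the hint about ``linear combination with spatially flat functions.'' Set
\bas
	\tilde v:=v-\frac{1}{|\Om|}\io v, \qquad \tilde u:=u-\frac{1}{|\Om|}\io u,
\eas
where $\io v$ and $\io u$ are explicit by Lemma \ref{lem_mass}. Then $\tilde v$ solves
\bas
	\tilde v_t=(\gamma(\Theta)\tilde v_x)_x+a\tilde v-a^2\tilde u
\eas
with Neumann data. The quantities $\tilde v$ and $\tilde u$ are bounded in $L^\infty$ uniformly in time, by Poincar\'e--Wirtinger and the embedding in one dimension, since $\|\tilde v\|_{L^\infty}\le c\|v_x\|_{L^2}$ is bounded by Corollary \ref{cor255}, and similarly $\tilde u$ by (\ref{22.1}). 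This only yields Hölder continuity of $\tilde v$, though, not of $v_x$. For $v_x$ itself, the key observation is that the flux $q=\gamma(\Theta)\tilde v_x$ satisfies $q_x=\tilde v_t-a\tilde v+a^2\tilde u$. So I would obtain Hölder regularity of $v_x$ as $q/\gamma(\Theta)$, using Lemma \ref{lem26} for $\gamma(\Theta)$.

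To get $q$ Hölder, I would return to $w=v_x$ in the divergence form above, which can be rewritten as
\bas
	w_t=\big(\gamma(\Theta)w_x + f\big)_x + g, \qquad f:=\gamma'(\Theta)\Theta_x w,\quad g:=aw-a^2u_x .
\eas
I would then apply the divergence-form Hölder theory (again \cite{PV}, or Ladyzhenskaya--Solonnikov--Ural'tseva, Theorem III.10.1) on $[t_0-1,t_0+1]$, with a cutoff $\zeta$ in time exactly as in Lemma \ref{lem26}. The inputs required are:
\begin{itemize}
\item $w\in L^\infty$ on $\Om\times(t_0-1,t_0+1)$ uniformly in $t_0$;
\item $f\in L^{p}$ with suitable $p$ (here $p>3$ in space-time dimension $1+1$ suffices);
\item $g\in L^\infty$.
\end{itemize}
The bounds on $g$ come from Lemma \ref{lem23} and from $w\in L^\infty$. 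For $w\in L^\infty$ I would use Lemma \ref{lem24} only in the $L^2_t$ sense, which is not enough. Instead, I would first derive $\sup_t\|v_{xx}\|_{L^2}$ bounds on $[t_0-1,t_0+1]$ by testing the $w$-equation against $-w_{xx}$ with a time cutoff, using $\int_{t_0-2}^{t_0+1}\io v_{xx}^2\le c$ from (\ref{22.3}). Alternatively, and more simply, I could use Moser iteration directly for the divergence-form equation, which produces $L^\infty$ from $L^2$ bounds given $f\in L^p$.

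\textbf{Main obstacle:} controlling $f=\gamma'(\Theta)\Theta_x v_x$, that is, obtaining a time-uniform bound for $\Theta_x$ in some $L^p(\Om\times(t_0-1,t_0+1))$ with $p$ large. I would first try to get it from the $\Theta$-equation via maximal regularity or a gradient estimate. The source $\gamma(\Theta)u_{xt}^2$ satisfies (\ref{26.2}), i.e.\ it is bounded in $L^2((t_0-1,t_0+1);L^\infty(\Om))$. The Neumann heat semigroup smoothing estimate $\|\partial_x e^{tD\Delta}\vp\|_{L^\infty}\le ct^{-1/2}\|\vp\|_{L^\infty}$, combined with the time-cutoff Duhamel representation as in Lemma \ref{lem26}, gives
\bas
	\|\Theta_x(\cdot,t)\|_{L^\infty(\Om)}\le c\int_{t_0-1}^t (t-s)^{-1/2}\|h(\cdot,s)\|_{L^\infty(\Om)}\,ds .
\eas
Since $(t-s)^{-1/2}\notin L^2$, this bound lies in $L^r_t$ for all $r<\infty$ by Young's convolution inequality ($L^{2-}*L^2\subset L^r$ for every $r<\infty$). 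That yields $f\in L^p$ for every $p<\infty$ once $w\in L^\infty$ is known, which closes the argument and gives (\ref{27.1}) with $t_0>2$, allowing two cutoff layers.
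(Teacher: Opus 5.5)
Your final argument works, but it takes a genuinely different route from the paper's. The paper does not differentiate the equation. It subtracts from $u$ and $v$ the spatially flat functions $\frac{1}{|\Om|}\{\io u_0+t\io u_{0t}\}$ and $\frac{1}{|\Om|}\{a\io u_0+(1+at)\io u_{0t}\}$. This removes the linear-in-time growth of $\io u$ and $\io v$, so the source $h=a\wh{v}-a^2\wh{u}$ is bounded uniformly in time. It then applies Lieberman's gradient H\"older estimate for divergence-form equations with conormal boundary conditions to $\wh{v}_t=(\gamma(\Theta)\wh{v}_x)_x+h$. The only input needed on the coefficient is two-sided boundedness plus H\"older continuity, which is exactly what Lemma \ref{lem25} and Lemma \ref{lem26} supply.

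You instead pass to $w=v_x$. Then constants drop out automatically, since the source $-a^2u_x$ is bounded by Lemma \ref{lem23}. The boundary condition becomes homogeneous Dirichlet, and classical De Giorgi--Nash--Moser theory with a merely bounded measurable principal coefficient suffices. The price is the new drift term $(\gamma'(\Theta)\Theta_x w)_x$. You thus trade H\"older continuity of the coefficient (and the heavier gradient-regularity machinery) for integrability of its derivative, namely $\Theta_x\in L^p$ with $p>3$ uniformly on unit time intervals. The paper never needs this at this stage: its $\Theta_x$ bounds come only in Lemma \ref{lem29}, after Lemma \ref{lem28}. Your Duhamel argument, combining the Neumann heat-semigroup gradient bound with Young's convolution inequality, correctly supplies this from the same input (\ref{26.2}) that underlies Lemma \ref{lem26}. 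It gives $\|\Theta_x\|_{L^\infty(\Om)}\in L^s(t_0-1,t_0+1)$ for every $s<\infty$, uniformly in $t_0>2$.

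One step is circular as written. You get $w\in L^\infty$ from Moser iteration ``given $f\in L^p$'', but you get $f=\gamma'(\Theta)\Theta_x w\in L^p$ only ``once $w\in L^\infty$ is known''. The equation is linear in $w$, so the fix is to treat $b:=\gamma'(\Theta)\Theta_x$ as a lower-order coefficient in $(bw)_x$, rather than treating $f$ as a datum.
\begin{itemize}
\item Ladyzhenskaya--Solonnikov--Ural'tseva, Ch.~III (the maximum estimate and the H\"older estimate for $V_2$ solutions), admits $b^2\in L^{q,r}$ with $\frac1r+\frac{1}{2q}<1$. The $L^\infty$ and H\"older bounds for $w$ then follow directly from $\sup_t\io v_x^2$ (Corollary \ref{cor255}) and the space--time integral of $v_{xx}^2$ from (\ref{22.3}).
\item The term $aw$ is a zero-order term with constant coefficient.
\item The cut-off contribution $\zeta' w$ is an admissible source, since $\|w\|_{L^\infty(\Om)}\in L^2_t$ by Lemma \ref{lem24}.
\item Alternatively, Gagliardo--Nirenberg together with Corollary \ref{cor255} and (\ref{22.3}) gives $\|v_x\|_{L^\infty(\Om)}\in L^4_t$. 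Hence $f\in L^\infty_xL^\rho_t$ for every $\rho<4$, which already meets the required condition ($\rho>2$ suffices) without any prior $L^\infty$ bound.
\end{itemize}

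Drop your first suggestion of testing against $-w_{xx}$. It produces $\Theta_{xx}$, $\Theta_x^2$ and third derivatives of $v$, none of which are controlled at this point. The detour through the flux $q=\gamma(\Theta)\tilde v_x$ is also superfluous, because the De Giorgi argument gives H\"older continuity of $w=v_x$ directly.
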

\proof
  We let 
  \bas
	\wh{u}(x,t):=u(x,t)-\frac{1}{|\Om|} \cdot \bigg\{ \io u_0 + t \io u_{0t} \bigg\}
  \eas
  as well as
  \bas
	\wh{v}(x,t):=v(x,t)-\frac{1}{|\Om|} \cdot \bigg\{ a \io u_0 + (1+at) \io u_{0t} \bigg\}
  \eas
  for $x\in\Om$ and $t>0$. Then drawing on (\ref{mass}) we readily confirm that
  \be{mass0}
	\io \wh{u} = \io \wh{v} = 0
	\qquad \mbox{for all } t>0,
  \ee
  and that according to (\ref{0v}),
  \be{27.2}
	\wh{v}_t = \big(A(x,t) \wh{v}_x\big)_x + h(x,t),
	\qquad x\in\Om, \ t>0,
  \ee
  where $A(x,t):=\gamma(\Theta(x,t))$ and $h(x,t):=a\wh{v}(x,t)-a^2 \wh{u}(x,t)$, $(x,t)\in\Om\times (0,\infty)$.
  Here, Lemma \ref{lem25}, Lemma \ref{lem26} and (\ref{g1}) provide positive constants $c_1, c_2, c_3$ and $\al_1$ such that
  \be{27.3}
	c_1 \le A \le c_2
	\quad \mbox{in } \Om\times (0,\infty)
	\qquad \mbox{and} \qquad
	\|A(\cdot,t)\|_{C^{\al_1}(\bom)} \le c_3
	\quad \mbox{for all } t>1,
  \ee
  while a combination of Lemma \ref{lem22} with Corollary \ref{cor255} shows that thanks to 
  (\ref{mass0})
  and the continuity of the embedding $W^{1,2}(\Om) \hra L^\infty(\Om)$ we can find $c_4>0$ such that
  \be{27.4}
	|h(x,t)| \le c_4
	\qquad \mbox{for all $x\in\Om$ and } t>0.
  \ee
  An argument based on localization in time, quite in the flavor of that in Lemma \ref{lem26}, now enables us to apply
  a standard result on parabolic gradient regularity (\cite[Theorem 1.1]{lieberman}) 
  to infer from (\ref{27.2}), (\ref{27.3}) and (\ref{27.4})
  that, indeed, (\ref{27.1}) holds with some suitably small $\al\in (0,1)$ and some adequately large $C>0$.
\qed
In order to prepare a suitable interpolation between the estimates from Lemma \ref{lem27} and Corollary \ref{cor255},
but also a similar argument addressing the temperture variable in Lemma \ref{lem31} below,
we include a brief proof of a Gagliardo-Nirenberg type inequality which, up to explicit information of constants involved,
is essentially well-known:
\begin{lem}\label{lem21}
  Let $\al\in (0,1)$ and $p\in [1,\infty)$. Then for each $\vp\in C^\al(\bom)$,
  \be{21.1}
	\|\vp\|_{L^\infty(\Om)} 
	\le 
	\frac{p\al+1}{(p\al)^\frac{p\al}{p\al+1}}
		\cdot [\vp]_{\al;\bom}^\frac{1}{p\al+1} \cdot \|\vp\|_{L^p(\Om)}^\frac{p\al}{p\al+1}
	\ + \ \frac{p\al+1}{p\al |\Om|^\frac{1}{p}} \cdot \|\vp\|_{L^p(\Om)},
  \ee
  where we have set
  \bas
	[\psi]_{\al;\bom} := \sup_{\begin{array}{c} \\[-6mm] \scriptstyle x\in\bom, y\in\bom \\[-2mm] \scriptstyle x\ne y \end{array}} \frac{|\psi(x)-\psi(y)|}{|x-y|^\al}
  \eas
  for $\psi\in C^\al(\bom)$.
\end{lem}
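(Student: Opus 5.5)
Write $M:=\|\vp\|_{L^\infty(\Om)}$, $L:=[\vp]_{\al;\bom}$ and $N:=\|\vp\|_{L^p(\Om)}$. Since $\vp$ is continuous on the compact set $\bom$, I pick $x_0\in\bom$ with $|\vp(x_0)|=M$. The Hölder bound gives
\bas
	|\vp(y)| \ge M-L|y-x_0|^\al
	\qquad \mbox{for all } y\in\bom.
\eas
For $r>0$ with $Lr^\al<M$, set $J_r:=\{y\in\bom: |y-x_0|\le r\}$. Because $\Om$ is an interval and $x_0\in\bom$, $J_r$ is an interval of length $|J_r|\ge\min\{r,|\Om|\}$; the factor is $1$ rather than $2$ since $x_0$ may be an endpoint. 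Integrating over $J_r$ then yields
\bas
	N^p \ge \int_{J_r} (M-L|y-x_0|^\al)_+^p\,dy.
\eas

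I then split into two cases. If $L=0$, then $\vp$ is constant, $N=M|\Om|^{1/p}$, and the second summand alone suffices, since $\frac{p\al+1}{p\al}\ge1$. If $L>0$, let $r_0:=(M/L)^{1/\al}$, the radius at which the lower bound vanishes. When $r_0\le|\Om|$, I integrate over $[x_0,x_0+r_0]$ or $[x_0-r_0,x_0]$, at least one of which lies in $\bom$ up to the boundary constraint; this needs a little care, so I will use whichever one-sided interval lies in $\bom$, which has length $\min\{r_0,\text{dist to endpoint}\}$.

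To avoid endpoint fuss, I use the simpler estimate $\int_0^{r_0}(M-Ls^\al)^p\,ds$ together with convexity, or more simply restrict to $s\le\theta r_0$. It is cleanest to exploit the given constants. For $s\le\rho$ we have $(M-Ls^\al)\ge M-L\rho^\al$, so $N^p\ge\rho\,(M-L\rho^\al)^p$, hence
\bas
	M\le L\rho^\al + N\rho^{-1/p}.
\eas
This holds for every $\rho\in(0,|\Om|]$, since $\bom$ contains a one-sided interval of length $\rho$ at $x_0$. It also holds trivially when $M\le L\rho^\al$.

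Minimizing $f(\rho)=L\rho^\al+N\rho^{-1/p}$ over $\rho>0$ gives $\rho_*=(N/(p\al L))^{p/(p\al+1)}$ and the value
\bas
	f(\rho_*)=\frac{p\al+1}{(p\al)^{\frac{p\al}{p\al+1}}}\,L^{\frac{1}{p\al+1}}N^{\frac{p\al}{p\al+1}},
\eas
which is exactly the first term. If $\rho_*\le|\Om|$ we are done. If instead $\rho_*>|\Om|$, I take $\rho=|\Om|$. Then $L|\Om|^\al\le L\rho_*^{\al}=\frac{1}{p\al}N\rho_*^{-1/p}\le\frac{1}{p\al}N|\Om|^{-1/p}$, where the equality is the first-order condition at $\rho_*$. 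Hence
\bas
	M\le\Big(1+\frac{1}{p\al}\Big)N|\Om|^{-1/p},
\eas
which is precisely the second term.

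The main obstacle is only this bookkeeping: obtaining the exact constants. Both steps go through via the first-order identity $\al L\rho_*^{\al}=\frac1p N\rho_*^{-1/p}$.
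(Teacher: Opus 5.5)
Your proposal is correct and takes essentially the paper's route. Both arguments establish $\|\varphi\|_{L^\infty(\Omega)}\le [\varphi]_{\alpha;\overline{\Omega}}\,\rho^{\alpha}+\|\varphi\|_{L^p(\Omega)}\,\rho^{-1/p}$ for all $\rho\in(0,|\Omega|]$, and both then take $\rho=\min\{\rho_*,|\Omega|\}$ with the same two cases. The paper gets this bound by locating a point of small $|\varphi|$ in a length-$\rho$ subinterval containing an arbitrary $x$, whereas you bound $|\varphi|$ from below near a maximum point. Your separate treatment of $L=0$ also covers constant functions, which the paper's remark ``$H\ne 0$'' skips.

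There is one slip. You claim that $\overline{\Omega}$ contains a one-sided interval of length $\rho$ at $x_0$, but this fails, e.g.\ when $x_0$ is the midpoint and $\rho>|\Omega|/2$. Your own set $J_\rho$ repairs it: it has length at least $\rho$ and consists of points within distance $\rho$ of $x_0$, so it gives $N^p\ge\rho\,(M-L\rho^{\alpha})_+^p$ just as well.
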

\proof
  We let
  \be{21.2}
	d:=(p\al)^{-\frac{p}{p\al+1}},
  \ee
  and for $\vp\in C^\al(\bom)$, we moreover abbreviate $I:=\|\vp\|_{L^p(\Om)}$ and
  $H:=[\vp]_{\al;\bom}$, noting that $H\ne 0$, and that we may thus set 
  $\del:=\min\{ d\cdot (\frac{I}{H})^\kappa \, , \, |\Om|\}$ with $\kappa:=\frac{p}{p\al+1}$.
  Then since $\del\le |\Om|$, for each fixed $x\in\bom$ we can find a subinterval $\Om_0=\Om_0(x)\subset\Om$ such that
  $|\Om_0|=\del$ and $x\in \bom_0$,
  and thanks to the continuity of $\vp$ we can furthermore pick $x_0=x_0(x)\in\bom_0$ with the property that
  \bas
	I^p \ge \int_{\Om_0} |\vp(y)|^p dy = \del |\vp(x_0)|^p,
  \eas
  so that $|\vp(x_0)| \le \del^{-\frac{1}{p}} I$.
  Since $|\vp(x)-\vp(x_0)| \le |x-x_0|^\al \cdot H \le \del^\al H$ due to the fact that $x_0\in\bom_0$, we thus obtain that
  \be{21.4}
	|\vp(x)|
	\le |\vp(x_0)| + |\vp(x)-\vp(x_0)| 
	\le \del^{-\frac{1}{p}} I + \del^\al H,
  \ee
  which in the case when $|\Om| \ge d\cdot (\frac{I}{H})^\kappa$ implies that
  \bea{21.5}
	|\vp(x)|
	&\le& d^{-\frac{1}{p}} \cdot \Big(\frac{I}{H}\Big)^{-\frac{\kappa}{p}} I + d^\al \cdot \Big(\frac{I}{H}\Big)^{\al\kappa} H 
	\nn\\
	&=& (d^{-\frac{1}{p}} + d^\al) H^\frac{1}{p\al+1} I^\frac{p\al}{p\al+1},
  \eea
  because $\frac{\kappa}{p}=1-\al\kappa=\frac{1}{p\al+1}$ and $1-\frac{\kappa}{p}=\al\kappa=\frac{p\al}{p\al+1}$
  according to the choice of $\kappa$.
  As $d^{-\frac{1}{p}}+d^\al=(p\al)^\frac{1}{p\al+1} + (p\al)^{-\frac{p\al}{p\al+1}} = (p\al)^{-\frac{p\al}{p\al+1}} \cdot (p\al+1)$
  by (\ref{21.2}), we thus obtain (\ref{21.1}) from (\ref{21.5}) in this case.\abs
  If, conversely, $|\Om| < d\cdot (\frac{I}{H})^\kappa$ and hence $\del=|\Om|$, 
  then $H<d^\frac{1}{\kappa} |\Om|^{-\frac{1}{\kappa}} I$, so that (\ref{21.4}) ensures that
  \bas
	|\vp(x)| 
	\le \Big\{ |\Om|^{-\frac{1}{p}} + |\Om|^\al \cdot d^\frac{1}{\kappa} |\Om|^{-\frac{1}{\kappa}} \Big\} \cdot I,
  \eas
  and thereby entails (\ref{21.1}) also in any such situation, as $\al-\frac{1}{\kappa}=-\frac{1}{p}$
  and $1+d^\frac{1}{\kappa}=1+(p\al)^{-1}=\frac{p\al+1}{p\al}$.
\qed
{\bf Remark.} \quad
  By elementary minimization, it can be seen that the number $d$ in (\ref{21.2}) satisfies
  \bas
	d^{-\frac{1}{p}} + d^\al = \min_{\xi>0} \big(\xi^{-\frac{1}{p}}+\xi^\al\big).
  \eas
  Therefore, (\ref{21.1}) appears as the best consequence that can be drawn from (\ref{21.4}).\abs%
Indeed, we can thereby improve decay statement made in Corollary \ref{cor255}
with respect to its topological part.
\begin{lem}\label{lem28}
  Assume (\ref{g1}) and (\ref{g2}) as well as (\ref{aL}).
  Then there exist $\beta>0$ and $C>0$ such that
  \be{28.1}
	\|v_x(\cdot,t)\|_{L^\infty(\Om)} \le C e^{-\beta t}
	\qquad \mbox{for all } t>2.
  \ee
\end{lem}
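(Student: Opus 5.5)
The plan is to apply the Gagliardo--Nirenberg type inequality from Lemma \ref{lem21} to $\vp:=v_x(\cdot,t)$ with $p=2$ and with the H\"older exponent $\al$ taken from Lemma \ref{lem27}. The H\"older seminorm will be kept bounded by Lemma \ref{lem27}, while the $L^2$ norm decays exponentially by Corollary \ref{cor255}. No new differential inequalities are needed.

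Concretely, Lemma \ref{lem27} provides $\al\in(0,1)$ and $c_1>0$ such that
\[
\|v_x\|_{C^{\al,\frac{\al}{2}}(\bom\times[t_0,t_0+1])}\le c_1
\qquad\mbox{for all } t_0>2.
\]
Since every $t>2$ lies in some such interval $[t_0,t_0+1]$ (e.g.\ choose $t_0\in(2,t]$ with $t-t_0\le1$; for $t$ close to $2$ take $t_0$ slightly above $2$ but below $t$), this yields $[v_x(\cdot,t)]_{\al;\bom}\le c_1$ for all $t>2$. Corollary \ref{cor255} provides $c_2>0$ and $\beta_1>0$ with $\|v_x(\cdot,t)\|_{L^2(\Om)}\le c_2e^{-\beta_1 t}$ for all $t>0$. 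Inserting both bounds into (\ref{21.1}) with $p=2$ gives, for all $t>2$,
\[
\|v_x(\cdot,t)\|_{L^\infty(\Om)}\le c_3\, c_1^{\frac{1}{2\al+1}}\big(c_2e^{-\beta_1 t}\big)^{\frac{2\al}{2\al+1}}+c_4\,c_2e^{-\beta_1 t},
\]
where $c_3$ and $c_4$ are the explicit constants appearing in (\ref{21.1}). This implies (\ref{28.1}) with $\beta:=\frac{2\al}{2\al+1}\beta_1$ and a suitable $C$.

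There are two points to handle. First, Lemma \ref{lem21} as written divides by $H=[\vp]_{\al;\bom}$, so it implicitly assumes $H\ne0$. If $v_x(\cdot,t)$ has vanishing seminorm, it is constant, hence identically zero because $v_x=0$ on $\pO$. In that case (\ref{28.1}) holds trivially, or alternatively one may replace $H$ by $\max\{H,\epsilon\}$ and let $\epsilon\to0$. Second, one must make sure the H\"older bound from Lemma \ref{lem27} is uniform, which it is by construction. Beyond these, no real obstacle is expected: the substantive work was done in Lemma \ref{lem27}, and this step is pure interpolation.
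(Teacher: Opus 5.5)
Your proposal is correct and follows the paper's proof essentially step for step: you interpolate the uniform H\"older bound from Lemma \ref{lem27} against the exponential $L^2$ decay from Corollary \ref{cor255} via Lemma \ref{lem21} with $p=2$, which gives $\beta=\frac{2\al\beta_1}{2\al+1}$. Your extra remark on the degenerate case $[v_x(\cdot,t)]_{\al;\bom}=0$ is a valid small clarification of a point the proof of Lemma \ref{lem21} passes over, since in that case $v_x(\cdot,t)\equiv 0$ by the boundary condition.
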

\proof
  We first invoke Lemma \ref{lem27} to pick $\al>0$ and $c_1>0$ such that
  \be{28.2}
	\|v_x(\cdot,t)\|_{C^\al(\bom)} \le c_1
	\qquad \mbox{for all } t>2,
  \ee
  while Corollary \ref{cor255} provides $\beta_1>0$ and $c_2>0$ fulfilling
  \be{28.3}
	\|v_x(\cdot,t)\|_{L^2(\Om)} \le c_2 e^{-\beta_1 t}
	\qquad \mbox{for all } t>0.
  \ee
  We then rely on Lemma \ref{lem21} in choosing $c_3>0$ in such a way that
  \bas
	\|\vp\|_{L^\infty(\Om)}
	\le c_3 \|\vp\|_{C^\al(\bom)}^\frac{1}{2\al+1} \|\vp\|_{L^2(\Om)}^\frac{2\al}{2\al+1}
	+ c_3 \|\vp\|_{L^2(\Om)}
	\qquad \mbox{for all $\vp\in C^\al(\bom)$},
  \eas
  and thereby conclude from (\ref{28.2}) and (\ref{28.3}) that
  \bas
	\|v_x\|_{L^\infty(\Om)} \le c_3 \cdot c_1^\frac{1}{2\al+1} \cdot \big( c_2 e^{-\beta_1 t}\big)^\frac{2\al}{2\al+1}
	+ c_3 c_2 e^{-\beta_1 t}
	\qquad \mbox{for all } t>2,
  \eas
  which entails (\ref{28.1}) with $C:=c_1^\frac{1}{2\al+1} c_2^\frac{2\al}{2\al+1} c_3 + c_2 c_3$ and
  $\beta:=\frac{2\al\beta_1}{2\al+1}$.
\qed
Together with Lemma \ref{lem25} and Lemma \ref{lem23}, this especially entails eventual
boundedness of the heat source in (\ref{0v}),
so that again standard parabolic theory can be applied to deduce H\"older bounds also for $\Theta_x$.
\begin{lem}\label{lem29}
  If (\ref{g1}), (\ref{g2}) and (\ref{aL}) are satisfied, then there exist $\al>0$ and $C>0$ such that
  \be{29.1}
	\|\Theta_x\|_{C^{\al,\frac{\al}{2}}(\bom\times [t_0,t_0+1])} \le C
	\qquad \mbox{for all } t_0>3.
  \ee
\end{lem}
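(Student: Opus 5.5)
The plan is to regard the temperature equation in (\ref{0v}) as a linear heat equation $\Theta_t = D\Theta_{xx} + g(x,t)$ with Neumann boundary conditions and source $g:=\gamma(\Theta)u_{xt}^2$, and to show first that $g$ is bounded in $\Om\times(2,\infty)$. Then standard parabolic gradient H\"older estimates, localized in time as in Lemma \ref{lem26}, give (\ref{29.1}). For the bound on $g$, write $u_{xt}=v_x-au_x$ by (\ref{v}). Lemma \ref{lem28} gives $\|v_x(\cdot,t)\|_{L^\infty(\Om)}\le c_1$ for $t>2$, and Lemma \ref{lem23} gives $\|u_x(\cdot,t)\|_{L^\infty(\Om)}\le c_2$ for all $t>0$. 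Lemma \ref{lem25} together with (\ref{g1}) gives $\gamma(\Theta)\le\gamma(C)$. Hence $|g|\le c_3$ in $\Om\times(2,\infty)$, and Lemma \ref{lem25} also shows $|\Theta|\le c_4$ there.

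For the localization, fix $t_0>3$ and take a cutoff $\zeta\in C^\infty([t_0-1,t_0+1])$ with $\zeta(t_0-1)=0$, $\zeta\equiv1$ on $[t_0,t_0+1]$ and $|\zeta'|\le2$. Set $w:=\zeta\Theta$. Then $w$ solves $w_t=Dw_{xx}+\zeta g+\zeta'\Theta$ on $\Om\times(t_0-1,t_0+1)$, with $w_x=0$ on $\pO$ and $w(\cdot,t_0-1)=0$. Since $t_0-1>2$, the right-hand side $\tilde h:=\zeta g+\zeta'\Theta$ satisfies $\|\tilde h\|_{L^\infty(\Om\times(t_0-1,t_0+1))}\le c_3+2c_4$, a bound independent of $t_0$.

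For a constant-coefficient heat equation with bounded source, homogeneous Neumann data and zero initial data on a time interval of fixed length $2$, one can use the $W^{2,1}_q$ maximal regularity estimate from \cite{lieberman} or from classical Ladyzhenskaya-type theory. This gives $\|w\|_{W^{2,1}_q(\Om\times(t_0-1,t_0+1))}\le c_5(q)$ for every finite $q$. Choosing $q>3$, the embedding $W^{2,1}_q\hra C^{1+\al,\frac{1+\al}{2}}$ then shows $\|w_x\|_{C^{\al,\frac{\al}{2}}}\le c_6$. Alternatively, one can apply \cite[Theorem 1.1]{lieberman} directly, as in Lemma \ref{lem27}, with $A\equiv D$. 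Because $w=\Theta$ on $[t_0,t_0+1]$, this yields (\ref{29.1}). The constants do not depend on $t_0$, since the problem is translation invariant in time and all data bounds are uniform.

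I do not expect a serious obstacle; the only point needing care is that the estimates must be uniform in $t_0$. Lemma \ref{lem28} holds only for $t>2$, which is why we need $t_0-1>2$, that is, $t_0>3$. Beyond that, we must make sure the cited regularity result applies with zero initial data and Neumann conditions up to the lateral boundary. The initial compatibility is automatic, because $w$ vanishes at $t_0-1$ and the boundary condition is homogeneous Neumann.
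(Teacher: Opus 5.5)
Your proposal is correct and follows the paper's argument essentially step by step. You bound the heat source $\gamma(\Theta)(v_x-au_x)^2$ in $L^\infty(\Om\times(2,\infty))$ via Lemmas \ref{lem25}, \ref{lem28} and \ref{lem23}, localize in time with the same cutoff as in Lemma \ref{lem26} (which is exactly why $t_0>3$ is needed), and conclude with a parabolic gradient H\"older estimate that is uniform in $t_0$. The paper applies Lieberman's Theorem 1.1 directly; your $W^{2,1}_q$ route with $q>3$ and the embedding into $C^{1+\al,\frac{1+\al}{2}}$ is an equally valid alternative for this constant-coefficient Neumann problem.
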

\proof
  The second equation in (\ref{0}) can be recast in the form
  \be{29.2}
	\Theta_t = D\Theta_{xx} + h(x,t),
	\qquad (x,t)\in\Om\times (0,\infty),
  \ee
  where Lemma \ref{lem25}, Lemma \ref{lem28} and Lemma \ref{lem23} particularly ensure that 
  $h(x,t):=\gamma(\Theta(x,t)) (v_x(x,t)-au_x(x,t))^2$, $(x,t)\in\Om\times (0,\infty)$, 
  defines a function $h\in L^\infty(\Om\times (2,\infty))$.
  Again relying on a suitable cut-off procedure similar to that from Lemma \ref{lem26}, we may thus conclude (\ref{29.1}) from
  (\ref{29.2}) by one more application of the gradient H\"older estimates established in \cite[Theorem 1.1]{lieberman}.
\qed
Again, this boundedness property can be supplemented by some decay information with respect to a coarser topology:
\begin{lem}\label{lem30}
  Let (\ref{g1}), (\ref{g2}) and (\ref{aL}) hold. Then, with some $\beta>0$ and $C>0$, we have
  \be{30.1}
	\io \Theta_x^2(\cdot,t) \le C e^{-\beta t}
	\qquad \mbox{for all } t>3.
  \ee
\end{lem}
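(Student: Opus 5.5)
We plan to test the heat equation in (\ref{0}) against $-\Theta_{xx}$ and exploit the exponential decay of the heat source that is now available. Multiplying the second equation of (\ref{0}) by $-\Theta_{xx}$ and integrating by parts (the boundary terms vanish because $\Theta_x=0$ on $\pO$) gives
\bas
	\frac{1}{2}\frac{d}{dt}\io \Theta_x^2 + D\io \Theta_{xx}^2
	= -\io \gamma(\Theta) u_{xt}^2 \Theta_{xx}
	\qquad \mbox{for all } t>0.
\eas

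The source term has to be estimated first. By Lemma \ref{lem25} and the monotonicity of $\gamma$ we have $\gamma(\Theta)\le c_1:=\gamma(\|\Theta\|_{L^\infty(\Om\times(0,\infty))})$. Since $u_{xt}=v_x-au_x$, Lemma \ref{lem28} and Lemma \ref{lem23} give
\bas
	\|u_{xt}(\cdot,t)\|_{L^\infty(\Om)}\le c_2 e^{-\beta_1 t}
	\qquad \mbox{for all } t>2.
\eas
Young's inequality then yields
\bas
	\Big|\io \gamma(\Theta)u_{xt}^2\Theta_{xx}\Big|
	\le \frac{D}{2}\io\Theta_{xx}^2 + \frac{c_1^2}{2D}\io u_{xt}^4
	\le \frac{D}{2}\io\Theta_{xx}^2 + c_3 e^{-4\beta_1 t}
	\qquad \mbox{for all } t>2.
\eas

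Next we produce a linear damping term. The function $\Theta_x$ belongs to $W_0^{1,2}(\Om)$, because $\Theta_x=0$ on $\pO$. Hence the Poincar\'e inequality (\ref{P}) gives $\io\Theta_{xx}^2\ge \lam_1\io\Theta_x^2$. Consequently $z(t):=\io\Theta_x^2(\cdot,t)$ satisfies
\bas
	z'(t)+D\lam_1 z(t)\le 2c_3 e^{-4\beta_1 t}
	\qquad \mbox{for all } t>2.
\eas
Applying variation of constants on $(3,\infty)$ with $\beta:=\min\{D\lam_1,4\beta_1\}/2$ yields $z(t)\le C e^{-\beta t}$ for all $t>3$. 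The constant $C$ depends on $z(3)$, which is finite by Lemma \ref{lem29} or simply by the classical regularity of $\Theta$.

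The only delicate point is justifying the identity for $\frac{d}{dt}\io\Theta_x^2$, since this needs $\Theta_{xt}$ to be integrable against $\Theta_x$. For $t>0$ we have $\Theta\in C^{2,1}$ by Theorem \ref{theo9}. To avoid requiring third derivatives, we would instead use the integrated form $\io\Theta_x\Theta_{xt}=-\io\Theta_{xx}\Theta_t$, or argue via a time-difference quotient. This is a standard matter; alternatively, one can test against $\Theta_t$ rather than $-\Theta_{xx}$, which gives the same inequality structure. Apart from this, the argument is routine and relies only on the decay estimates already established.
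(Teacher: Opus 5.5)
Your proposal is correct and follows essentially the same route as the paper: test the heat equation against $-\Theta_{xx}$, absorb the source term via Young's inequality using the bound on $\gamma(\Theta)$ from Lemma \ref{lem25} and the exponential $L^\infty$ decay of $v_x$ and $u_x$ from Lemmas \ref{lem28} and \ref{lem23}, then apply the Poincar\'e inequality to $\Theta_x\in W_0^{1,2}(\Om)$ and close with an ODE comparison on $(3,\infty)$. Your choice $\beta=\frac{1}{2}\min\{D\lam_1,4\beta_1\}$ and your difference-quotient justification of $\frac{d}{dt}\io\Theta_x^2=-2\io\Theta_{xx}\Theta_t$ are harmless refinements of details that the paper handles implicitly.
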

\proof
  A straightforward testing procedure applied to the second equation in (\ref{0}) shows that thanks to (\ref{v}) and
  Young's inequality,
  \bas
	\frac{1}{2} \frac{d}{dt} \io \Theta_x^2 + D \io \Theta_{xx}^2
	&=& - \io \gamma(\Theta) (v_x-au_x)^2 \Theta_{xx} \\
	&\le& \frac{D}{2} \io \Theta_{xx}^2
	+ \frac{1}{2D} \io \gamma^2(\Theta) (v_x-au_x)^4 \\
	&\le& \frac{D}{2} \io \Theta_{xx}^2
	+ \frac{4}{D} \io \gamma^2(\Theta) v_x^4
	+ \frac{4a^4}{D} \io \gamma^2(\Theta) u_x^4
	\qquad \mbox{for all } t>0.
  \eas
  Again using that $\gamma(\Theta)$ is bounded in $\Om\times (0,\infty)$ by Lemma \ref{lem25} and (\ref{g1}),
  we thus infer the existence of $c_1>0$ such that
  \bas
	\frac{d}{dt} \io \Theta_x^2 + D \io \Theta_{xx}^2
	\le c_1 \io v_x^4 + c_1 \io u_x^4
	\qquad \mbox{for all } t>0,
  \eas
  where according to the Poincar\'e inequality (\ref{P}),
  \bas
	D \io \Theta_{xx}^2 \ge D\lam_1 \io \Theta_x^2
	\qquad \mbox{for all } t>0,
  \eas
  and where Lemma \ref{lem23} along with Lemma \ref{lem28} yields $\beta_1\in (0,D\lam_1)$ and $c_2>0$ such that
  \bas
	c_1 \io v_x^4 + c_1 \io u_x^4
	\le c_2 e^{-\beta_1 t}
	\qquad \mbox{for all } t>3.
  \eas
  Therefore,
  \bas
	\frac{d}{dt} \io \Theta_x^2 + D\lam_1 \io \Theta_x^2
	\le c_2 e^{-\beta_1 t}
	\qquad \mbox{for all } t>3,
  \eas
  so that
  \bas
	\io \Theta_x^2(\cdot,t)
	\le e^{-\lam_1(t-3)} \io \Theta_x^2(\cdot,3)
	+ c_2 \int_3^t e^{-D\lam_1(t-s)} e^{-\beta_1 s} ds
	\qquad \mbox{for all } t>3.
  \eas
  Estimating
  \bas
	\int_3^t e^{-D\lam_1(t-s)} e^{-\beta_1 s} ds
	&=& e^{-D\lam_1 t} \int_3^t e^{(D\lam_1-\beta_1)s} ds \\
	&=& \frac{1}{D\lam_1-\beta_1} \big( e^{-\beta_1 t} - e^{-3\beta_1} e^{-D\lam_1(t-3)}\big) \\
	&\le& \frac{1}{D\lam_1-\beta_1} e^{-\beta_1 t}
	\qquad \mbox{for all } t>3,
  \eas
  from this we obtain (\ref{30.1}) if we let $\beta:=\beta_1$ and choose $C>0$ suitably large.
\qed
An interpolation similar to that from Lemma \ref{lem28} consequently yields the following.
\begin{lem}\label{lem31}
  Assume (\ref{g1}), (\ref{g2}) and (\ref{aL}). Then there exist $\beta>0$ and $C>0$ such that
  \be{31.1}
	\|\Theta_x(\cdot,t)\|_{L^\infty(\Om)} \le C e^{-\beta t}
	\qquad \mbox{for all } t>3.
  \ee
\end{lem}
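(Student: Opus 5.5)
The plan mirrors the argument in Lemma \ref{lem28}: we combine the uniform H\"older bound for $\Theta_x$ from Lemma \ref{lem29} with the exponential $L^2$ decay of $\Theta_x$ from Lemma \ref{lem30}, using the Gagliardo--Nirenberg type inequality of Lemma \ref{lem21} with $p=2$.

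The first step is to invoke Lemma \ref{lem29}, which yields $\al\in(0,1)$ (we may shrink $\al$ if necessary, since H\"older bounds on the bounded interval persist for smaller exponents) and $c_1>0$ such that
\bas
	\|\Theta_x(\cdot,t)\|_{C^\al(\bom)} \le c_1
	\qquad \mbox{for all } t>3.
\eas
This holds because every $t>3$ lies in some interval $[t_0,t_0+1]$ with $t_0>3$; for $t\in(3,4)$ we can take, e.g., $t_0=3+\frac{t-3}{2}$. In particular, the seminorm $[\Theta_x(\cdot,t)]_{\al;\bom}$ is bounded by $c_1$ uniformly in $t>3$.

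The second step takes $\beta_1>0$ and $c_2>0$ from Lemma \ref{lem30} such that
\bas
	\|\Theta_x(\cdot,t)\|_{L^2(\Om)} \le c_2 e^{-\frac{\beta_1}{2} t}
	\qquad \mbox{for all } t>3.
\eas
Lemma \ref{lem21} with $p=2$ then provides $c_3>0$ such that, for all $t>3$,
\bas
	\|\Theta_x(\cdot,t)\|_{L^\infty(\Om)}
	\le c_3 c_1^{\frac{1}{2\al+1}} \big(c_2 e^{-\frac{\beta_1}{2}t}\big)^{\frac{2\al}{2\al+1}}
	+ c_3 c_2 e^{-\frac{\beta_1}{2} t}.
\eas
This gives (\ref{31.1}) with $\beta:=\frac{\al\beta_1}{2\al+1}$, since $\frac{2\al}{2\al+1}<1$ makes the first term decay more slowly than the second.

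There is essentially no serious obstacle. The only points requiring attention are, first, that Lemma \ref{lem21} is formulated for functions in $C^\al(\bom)$, and $\Theta_x(\cdot,t)$ belongs to this space for $t>3$ by Lemma \ref{lem29}. Second, Lemma \ref{lem21} was proved under the tacit assumption $H\neq 0$; if $[\Theta_x(\cdot,t)]_{\al;\bom}=0$, then $\Theta_x(\cdot,t)$ is constant, and because $\Theta_x=0$ on $\pO$ this constant vanishes, so the estimate holds trivially.
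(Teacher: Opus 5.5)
Your proposal is correct and follows the paper's route: the paper likewise interpolates the H\"older bound from Lemma \ref{lem29} against the $L^2$ decay from Lemma \ref{lem30} via Lemma \ref{lem21}, in the manner of Lemma \ref{lem28}. Your bookkeeping is sound, including the square root of the decay from Lemma \ref{lem30} (giving $\beta=\frac{\al\beta_1}{2\al+1}$), the reduction to $\al\in(0,1)$, and the degenerate case $[\Theta_x(\cdot,t)]_{\al;\bom}=0$.
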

\proof
  This can be deduced by an interpolation between the boundedness result from Lemma \ref{lem29} and the decay estimate
  obtained in Lemma \ref{lem30} on the basis of the Gagliardo-Nirenberg inequality stated in Lemma \ref{lem21},
  quite in the style of the reasoning in Lemma \ref{lem28}.
\qed
As $0\le t \mapsto \frac{1}{|\Om|} \io \Theta(\cdot,t)$ monotonically approaches a finite limit, Lemma \ref{lem31}
implies exponentially fast stabilization of $\Theta$ toward this spatially homogeneous limit.
\begin{lem}\label{lem32}
  If (\ref{g1}), (\ref{g2}) and (\ref{aL}) hold, then there exist $\Theta_\infty\ge 0$,
  $\beta>0$ and $C>0$ such that
  \be{32.1}
	\bigg| \io \Theta(\cdot,t) - \Theta_\infty \bigg| \le C e^{-\beta t}
	\qquad \mbox{for all } t>0.
  \ee
\end{lem}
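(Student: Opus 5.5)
Integrating the second equation in (\ref{0}) over $\Om$ and using the Neumann condition $\Theta_x=0$ on $\pO$ gives
\bas
	\frac{d}{dt} \io \Theta = \io \gamma(\Theta) u_{xt}^2 \ge 0
	\qquad \mbox{for all } t>0,
\eas
so $t\mapsto \io\Theta(\cdot,t)$ is nondecreasing. By Lemma \ref{lem25} it is also bounded above by $C|\Om|$. Hence the limit $M_\infty:=\lim_{t\to\infty}\io\Theta(\cdot,t)$ exists and is finite. Since $\Theta\ge 0$, it is nonnegative, and we set $\Theta_\infty:=\frac{1}{|\Om|}M_\infty\ge 0$. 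The expression in (\ref{32.1}) should then be read as $|\io\Theta(\cdot,t)-|\Om|\Theta_\infty|$; equivalently, we compare the spatial mean with $\Theta_\infty$, and the two versions differ only by the factor $|\Om|$, which $C$ absorbs.

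By monotonicity, for every $t>0$,
\bas
	0\le |\Om|\Theta_\infty-\io\Theta(\cdot,t)
	= \int_t^\infty \io \gamma(\Theta) u_{xt}^2 .
\eas
It remains to show that this tail is exponentially small. By Lemma \ref{lem25} and (\ref{g1}) we have $\gamma(\Theta)\le c_1$ in $\Om\times(0,\infty)$. Recalling $u_{xt}=v_x-au_x$ from (\ref{v}), we get $u_{xt}^2\le 2v_x^2+2a^2u_x^2$. Corollary \ref{cor255} and Lemma \ref{lem22} give $\io v_x^2\le c_2e^{-\beta_1 t}$ and $\io u_x^2\le c_2e^{-\beta_1 t}$ for all $t>0$. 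Therefore
\bas
	\int_t^\infty \io\gamma(\Theta)u_{xt}^2 \le 2c_1(1+a^2)c_2\int_t^\infty e^{-\beta_1 s}ds = \frac{2c_1c_2(1+a^2)}{\beta_1}e^{-\beta_1 t},
\eas
which yields (\ref{32.1}) with $\beta:=\beta_1$.

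There is no real obstacle here: all the decay estimates needed are already available and hold for all $t>0$, not only for large times. The one point requiring care is that the limit is finite, which is exactly where the uniform bound from Lemma \ref{lem25} enters. As a consistency check, the same tail estimate also shows that the dissipation $\int_0^\infty\io\gamma(\Theta)u_{xt}^2$ is finite without appealing to Lemma \ref{lem25}; that lemma is needed only to bound $\gamma(\Theta)$.
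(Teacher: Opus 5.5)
Your proof is correct, and its skeleton (monotonicity of $\io\Theta$, existence of a finite limit, exponentially small tail $\int_t^\infty\io\gamma(\Theta)u_{xt}^2$) matches the paper's. The genuine difference is which decay information you feed into the tail.

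The paper estimates $\io\gamma(\Theta)(v_x-au_x)^2$ through the $L^\infty$ decay of $u_x$ and $v_x$ from Lemma \ref{lem23} and Lemma \ref{lem28}. Since the latter is only available for $t>2$, the paper gets $\frac{d}{dt}\io\Theta\le c_1e^{-\beta t}$ only for $t>2$ and must treat $[0,2]$ separately via boundedness of $\Theta$.

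You observe that an integrated quantity only needs the $L^2$ decay from Lemma \ref{lem22} and Corollary \ref{cor255}, which holds for all $t>0$. This makes the estimate uniform from the outset. It also shows that Lemma \ref{lem32} depends only on Lemma \ref{lem25} and Corollary \ref{cor255}, bypassing the H\"older and gradient-regularity machinery of Lemmas \ref{lem26}, \ref{lem27}, \ref{lem21} and \ref{lem28}. In the paper this detour costs nothing, since Lemma \ref{lem28} is needed anyway for (\ref{33.2}) and for Lemmas \ref{lem29}--\ref{lem31}. For this lemma alone, however, your route is the more economical one.

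Normalizing $\Theta_\infty$ as the limit of the spatial mean is harmless, since it only rescales $C$. It is in fact the normalization that (\ref{33.3}) requires.

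Your closing remark contains one inaccuracy. The tail estimate does rely on Lemma \ref{lem25}, and not only for bounding $\gamma(\Theta)$: Corollary \ref{cor255} itself uses Lemma \ref{lem25} to remove the weight $\frac{1}{\gamma(\Theta)+D}$ from (\ref{22.2}). This does not affect the proof.
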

\proof
  From (\ref{0}) and (\ref{v}) it follows that
  \bas
	\frac{d}{dt} \io \Theta = \io \gamma(\Theta) (v_x-au_x)^2
	\qquad \mbox{for all } t>0,
  \eas
  whence again using Lemma \ref{lem25}, Lemma \ref{lem23} and Lemma \ref{lem28} we infer that there exist $\beta>0$
  and $c_1>0$ such that
  \be{32.2}
	0 \le \frac{d}{dt} \io \Theta \le c_1 e^{-\beta t}
	\qquad \mbox{for all } t>2.
  \ee
  Therefore, with some $\Theta_\infty\ge 0$ we have
  \bas
	\io \Theta(\cdot,t)\nearrow \Theta_\infty
	\qquad \mbox{as } t\to\infty,
  \eas
  where once more by (\ref{32.2}),
  \bas
	\Theta_\infty - \io \Theta(\cdot,t)
	= \int_t^\infty \frac{d}{dt} \io \Theta(\cdot,s) ds
	\le c_1 \int_t^\infty e^{-\beta s} ds
	= \frac{c_1}{\beta} e^{-\beta t}
	\qquad \mbox{for all } t>2.
  \eas
  Due to the apparent boundedness of $\Theta\in \bom\times [0,2]$, this already yields the claim.
\qed
It remains to simply collect:\abs
\proofc of Theorem \ref{theo33}. \quad
  While (\ref{33.1}) and (\ref{33.2}) readily result from Lemma \ref{lem23}, Lemma \ref{lem28} and (\ref{mass}),
  the claim in (\ref{33.3}) is a consequence of Lemma \ref{lem31} when combined with Lemma \ref{lem32}.
\qed

\bigskip

\section*{Declarations}
{\bf Funding.} \quad
The authors acknowledge support of the Deutsche Forschungsgemeinschaft (Project No. 444955436).\abs
{\bf Conflict of interest statement.} \quad
The authors declare that they have no conflict of interest.\abs
and that they have no relevant financial or non-financial interests to disclose.\abs
{\bf Data availability statement.} \quad
Data sharing is not applicable to this article as no datasets were
generated or analyzed during the current study.

\end{document}